\documentclass[a4paper]{amsart}

\usepackage{xspace}
\usepackage{graphicx}

\numberwithin{equation}{section}

%
%

\newcommand{\defas}{:=}
\newcommand{\eps}{\epsilon}
\newcommand{\grd}{\nabla}
\DeclareMathOperator{\spn}{span}

\newcommand{\N}{\mathbb{N}}
\newcommand{\R}{\mathbb{R}}
\newcommand{\Rd}{\R^d}
\newcommand{\opnint}[2]{{]}#1,#2{[}}
\newcommand{\clsint}[2]{[{#1},{#2}]}

\newcommand{\ocint}[2]{{]{#1}},{#2]}}
\newcommand{\clos}[1]{\overline{#1}}

\DeclareMathOperator{\Conv}{Conv}
\DeclareMathOperator{\diam}{diam}
\DeclareMathOperator{\Vertices}{Vert}

\newcommand{\vol}[1]{\left|#1\right|}
\newcommand{\area}[1]{\left|#1\right|}

\newcommand{\error}[2]{\left|#1\right|_{#2}}
\newcommand{\verror}[2]{\left\|#1\right\|_{#2}}
\newcommand{\norm}[2]{\left\|#1\right\|_{#2}}
\newcommand{\seminorm}[2]{\left|#1\right|_{#2}}
\newcommand{\Hil}[1]{H^{#1}}
\newcommand{\Leb}[1]{L^{#1}}
\newcommand{\Sob}[2]{W^{#1,#2}}
\newcommand{\Lebnorm}[3]{\norm{#1}{0,#2;#3}}

\newcommand{\Sobseminorm}[4]{\seminorm{#1}{#2,#3;#4}}


\newcommand{\Ccmpl}{{C_\text{cmpl}}}
\newcommand{\CTr}{{C_\text{Tr}}}
\newcommand{\Cnstr}{\mathcal{C}}
\newcommand{\DBFct}{\Psi}
\newcommand{\DcConst}{\delta}
\newcommand{\elm}{K}

\newcommand{\Face}{F}
\newcommand{\Faces}{\mathcal{F}_{d-1}}
\newcommand{\Ip}{\Pi}
\newcommand{\IpSZ}{\Pi^{\text{SZ}}}

\newcommand{\LgrNodes}[1]{L_{#1}}
\newcommand{\LgrBFct}{\Phi}
\newcommand{\NodalVar}{N}
\newcommand{\MIdxOrd}[1]{|{#1}|}
\newcommand{\mesh}{\mathcal{M}}
\newcommand{\Poly}[2]{\mathcal{P}_{#1}({#2})}
\newcommand{\pdeg}{\ell} 
\newcommand{\refmnts}{\mathbb{M}}
\newcommand{\Simplex}{\elm}
\newcommand{\Simplexd}{{\Simplex_d}}
\newcommand{\Skeleton}{\Sigma}
\newcommand{\Splines}{S}
\newcommand{\ve}{\underline{e}}

\DeclareMathOperator{\supp}{supp}

\newcommand{\apriori}{a~priori\xspace}

\theoremstyle{plain}
\newtheorem{thm}{Theorem}[section]
\newtheorem{cor}{Corollary}[section]
\newtheorem{prop}{Proposition}[section]
\newtheorem{lem}{Lemma}[section]
\theoremstyle{definition}
\newtheorem{rem}{Remark}[section]

\begin{document}
%
%
%

%
%
%
%

\title[Piecewise polynomial approximation of gradients]{Approximating 
gradients with continuous piecewise polynomial functions}
\date{\today}

\author[A. Veeser]{Andreas Veeser}
\email{Andreas.Veeser@unimi.it}
\urladdr{http://www.mat.unimi.it/users/veeser/}

\keywords{Approximation of gradients, continuous piecewise polynomials,
  finite elements, Lagrange elements, discontinuous elements,
  a~priori error estimates, adaptive tree approximation}
\subjclass{41A15, 41A63, 41A05, 65N30, 65N15}

\begin{abstract}
Motivated by conforming finite element methods for elliptic problems of second 
order, we analyze the approximation of the gradient of a target function by 
continuous piecewise polynomial functions over a simplicial mesh.  The main 
result is that the global best approximation error is equivalent to an 
appropriate sum in terms of the local best approximations errors on elements.  
Thus, requiring continuity does not downgrade local approximability and 
discontinuous piecewise polynomials essentially do not offer additional 
approximation power, even for a fixed mesh.  This result implies error bounds in 
terms of piecewise regularity over the whole admissible smoothness range.  
Moreover, it allows for simple local error functionals in adaptive tree 
approximation of gradients.
\end{abstract}

\maketitle

\section{Introduction}
\label{S:Intro}
%
%

Finite element methods are one of the most successful tools for the
numerical solution of partial differential equations.  In their
simplest form they are Galerkin methods where the discrete space is
given by elements that are appropriately coupled.  This piecewise
structure allows to construct bases that are, on one hand, relatively
easy to implement and, on the other hand, are locally supported.  The
latter leads to linear systems with sparse matrices, which can be
stored and often solved with optimal linear complexity.

\medskip This article concerns the approximation properties of continuous
piecewise polynomial functions over a simplicial mesh, which build a prototype 
finite element space.  It analyzes the interplay of global and local best 
errors when approximating the gradient of a target function.

Continuous piecewise polynomial functions arise when solving elliptic boundary 
value problems of second order with Lagrange elements (see \S\ref{S:Poly} for 
a definition).  To be more specific, if the associated bilinear form is 
$H^1_0(\Omega)$-coercive, a typical choice is
\begin{equation}
\label{model_subspace}
 \Splines
 \defas
 \Big\{ v:\clos\Omega\to\R \mid 
  \forall\elm\in\mesh\ v_{|\elm}\in\Poly{\pdeg}{\elm}, \,
  v\in C^0(\clos\Omega), \, v_{|\partial\Omega}=0
 \Big\},
\end{equation}
where $\mesh$ is a conforming simplicial mesh of a domain $\Omega$ and 
$\Poly{\pdeg}{\elm}$ denotes the set of polynominals with degree $\leq\pdeg$ 
over an element $\elm\in\mesh$.  Requiring continuity and incorporating the 
boundary condition in \eqref{model_subspace} ensure $\Splines \subset 
H^1_0(\Omega)$ and thus the conformity of finite element method.
C\'ea's lemma therefore implies that the error of the Galerkin solution in
$\Splines$ is dictated by the best global approximation error for the
exact solution $u\in H^1_0(\Omega)$:
\begin{equation*}
\label{best-global-error}
 E(u)
 \defas
 E(u,\Splines)
 \defas
 \inf \big\{
  \norm{\grd(u-v)}{\Omega} \mid v\in\Splines
 \big\},
\end{equation*}
where $\norm{\cdot}{\Omega}$ stands for the norm of $L^2(\Omega)$.  In
view of the piecewise structure of space $\Splines$, the approximation of $u$ 
on each element $\elm\in\mesh$ is limited by the local shape functions 
$\Poly{\pdeg}{\elm}$.  This suggests to introduce the local best approximation 
errors
\begin{equation*}
\label{local-best-errors}
 e_\elm(u)
 \defas
 \inf \big\{
  \norm{\grd(u-p)}{\elm} \mid p\in\Poly{\pdeg}{\elm}
 \big\},
\qquad
 \forall\elm\in\mesh.
\end{equation*}
The question arises how the global and local best errors are related and how 
their relationship is affected by requiring conformity.

In the described context the main result Theorem \ref{T:elm-dec} reads as 
follows.  For any conforming mesh, the global best error is equivalent to the 
appropriately collected local best errors.  More precisely, there holds
\begin{equation}
\label{equivalence}
 \left(
  \sum_{\elm\in\mesh} e_\elm(u)^2
 \right)^{\frac12}
 \leq
 E(u)
 \leq
 C\left(
  \sum_{\elm\in\mesh} e_\elm(u)^2
 \right)^{\frac12},
\end{equation}
where $C$ can be bounded in terms of the shape regularity of $\mesh$.  The 
first inequality in \eqref{equivalence} is straight-forward and just a 
quantitative version of the motivation that suggests to introduce the local 
best errors.  The second inequality, which is not obvious and the proper 
concern of this paper, means that the above requirements for conformity 
essentially do not downgrade the approximation potiential given by the local 
discrete spaces.  It thus confirms in particular the coupling via continuity of 
the elements.
Adopting the broken $H^1$-seminorm as error notion, there is a second 
interpretation of \eqref{equivalence}: discontinuous and continuous piecewise 
polynomial functions have essentially the same approximation power.  We also 
derive variants of \eqref{equivalence} addressing the coupling of partial 
derivatives of the approximants and mesh conformity; see Theorem 
\ref{T:pder-dec} and Theorem \ref{T:n-elm-dec}, respectively.

\smallskip The second inequality in \eqref{equivalence} is proved in 
\S\ref{S:ConformityImpact} by means of suitable local error bounds for a 
continuous interpolant.  As interpolant, one can use a variant of the 
Scott-Zhang interpolant \cite{Scott.Zhang:90} or averages like in S.~Brenner 
\cite{Brenner:96} of local best approximations.  The key ingredients for these 
local error bounds are the Trace and Poincar\'e inequalities.

\medskip Equivalence \eqref{equivalence} reduces the quantification of
the global best error to the quantification of decoupled, local best
approximation errors.  We illustrate the usefulness of this aspect
with two applications in \S\ref{S:appl}.

First, the second inequality in \eqref{equivalence} may be used in the
\apriori analysis of finite element solutions.  For example, inserting
the Bramble-Hilbert lemma in the right-hand side of \eqref{equivalence}, one 
readily obtains the upper bound
\begin{equation}
\label{apriori-ub}
 E(u)
 \leq
 C \left(
  \sum_{\elm\in\mesh} h_\elm^{2\ell}
   \norm{D^{\ell+1} u}{\elm}^2
 \right)^{\frac12},
\end{equation}
where $h_\elm$ denotes the diameter of an element $\elm\in\mesh$.  Notice that 
the right-hand side involves only piecewise regularity and so vanishes whenever
$u\in\Splines$.  This is also true for Lagrange interpolation error estimates, 
but not for the available error bounds \cite{Clement:75,Scott.Zhang:90} for 
interpolation of non-smooth functions.  Here it is obtained without invoking 
the imbedding $H^{\ell+1}\subset C^0$ and so also under weaker regularity 
assumptions on the target function $u$. 

Second, \eqref{equivalence} can be applied in constructive nonlinear 
approximation.  When using the local best errors $e_\elm(u)$ as local error 
functionals in the adaptive tree approximation of P.~ Binev and R.~DeVore 
\cite{Binev.DeVore:04}, then equivalence \eqref{equivalence} ensures that the 
approximations, which are constructed with linear complexity, are near best 
with respect to the $H^1$-seminorm. 

\section{Continuous piecewise polynomial functions and gradients}
\label{S:Poly}
%
%
In this section we define the approximants, fix associated notation, and review 
their relationship with gradients.  We also provide a basis for them and, to 
prepare interpolation, link the coefficients of that basis to the space of 
target functions.

\bigskip Let $\Omega$ be a non-empty open set of $\Rd$, $d\in\N$.  As usual, 
$\Leb{2}(\Omega)$ denotes the Hilbert space of real-valued functions on 
$\Omega$ that are measurable and square-integrable with respect to the Lebesgue 
measure of $\Rd$ and $\Hil{1}(\Omega)$ is the Hilbert space of all functions 
that, together with their distributional gradients, are in $\Leb{2}(\Omega)$.

Given $k\in\N$ with $k\leq d$, a set $\Simplex$ is a $k$-simplex in $\Rd$ if it 
is the convex hull of $k+1$ points $a_0,\dots,a_k\in\Rd$ that do not lie
on a plane of dimension $k-1$.  The set of extreme points of a convex set $C$ 
is denoted by $\Vertices C$.  For example, there holds 
$\Vertices\Simplex = \{a_0,\dots,a_k\}$ in the definition of $k$-simplex.  A 
$m$-simplex $\Face$ with $m\in\{1,\dots,k\}$ is a $m$-face of $\Simplex$ if 
$\Vertices\Face \subset \Vertices\Simplex$. By convention, a vertex is a 
$0$-face.  As usual, $h_\Simplex$ denotes the diameter of $\Simplex$, while 
$\rho_\Simplex$ stands for the diameter of the largest ball in $\Simplex$. The 
boundary of any $d$-simplex $\Simplex$ in $\Rd$ can be represented locally by a 
Lipschitz function.  Hence the trace operator 
\begin{equation}
\label{TraceOp}
 (\cdot)_{|\partial\Simplex}:\Hil{1}(\Simplex)\to\Leb{2}(\partial\Simplex)
\end{equation}
is well-defined.  Hereafter $\Leb{2}(\partial\Simplex)$ stands for the
Hilbert space of all real-valued functions on $\partial\Simplex$ that
are measurable and square-integrable with respect to the
$(d-1)$-dimensional Hausdorff measure.


Assume that $\mesh$ is a conforming simplicial mesh of $\Omega$.  More
precisely, $\mesh$ is a finite sequence of $d$-simplices in $\Rd$ such
that
\begin{gather}
\label{A:conforming_mesh}
 \overline\Omega = \bigcup_{\elm\in\mesh} \elm
\qquad\text{and}\qquad
 \forall \elm,\elm'\in\mesh
\quad
 \Vertices (\elm\cap\elm') \subset \Vertices\elm \cap \Vertices\elm'.
\end{gather}
In \S\ref{S:ConformityImpact} below we make an additional
assumption on the mesh $\mesh$.  The second condition in
\eqref{A:conforming_mesh} has two implications.  First, it ensures
that $\mesh$ is a non-overlapping covering in that there holds
\[
 \vol{\Omega}
 =
 \sum_{\elm\in\mesh} \vol{\elm}
\]
for the Lebesgue measure $\vol{\cdot}$ in $\Rd$.  Second, it entails
that $\mesh$ is conforming or face-to-face, i.e.\ for any two
`elements' $\elm,\elm'\in\mesh$ the intersection $\elm\cap\elm'$ is a
$k$-face of both $d$-simplices $\elm$ and $\elm'$ for some
$k\in\{0,\dots,d\}$. 

The existence of a conforming simplicial mesh implies that $\Omega$ is
bounded and some regularity for its boundary $\partial\Omega$.  In
particular, cusps are outruled but domains like the slit domain
$\{x=(x_1,x_2)\in\R^2\mid \max\{|x_1|,|x_2|\}<1, x_2 \neq 0 \text{ or
} x_1<0 \}$ are allowed.  For the latter example, the usual definition
of the trace operator $\Hil{1}(\Omega)\to\Leb{2}(\partial\Omega)$ does
not apply as the boundary is not locally a graph of a function.
Nevertheless, exploiting \eqref{TraceOp}, we can define that a 
function $v\in\Hil{1}(\Omega)$ equals a function $g\in\Leb2(\partial\Omega)$ on 
the boundary by
\[
 v_{|\partial\Omega}=g
\quad\text{:$\iff$}\quad
 \forall\elm\in\mesh\ v_{|\partial\Omega\cap\partial\elm} = 
   g_{|\partial\Omega\cap\partial\elm}.
\]

\medskip As mentioned in the introduction \S\ref{S:Intro}, the space
\[
 \Splines^{\pdeg,0}_0(\mesh)
 \defas
 \{ V:\overline\Omega\to\R \mid
   \forall\elm\in\mesh\ V_{|\elm}\in\Poly{\pdeg}{\elm},\ 
    V\in C^0(\overline\Omega),\ 
   V_{|\partial\Omega}=0 \}
\]
with $\pdeg\in\N$ may be used when approximating functions in
\begin{equation}
\label{D:H10}
 H^1_0(\Omega)
 \defas
 \{v\in\Leb{2}(\Omega) \mid
  \grd v\in\Leb{2}(\Omega),\ 
  v_{|\partial\Omega} = 0 \}.
\end{equation}
The first property in the definition of $\Splines^{\pdeg,0}_0(\mesh)$
determines the basic nature of the approximants: their piecewise
structure and that each one can be identified with a finite number of
parameters.  The role of the other two properties, which constrain this basic 
nature, is clarified by the following proposition in terms of the space
\[
 \Splines^{\pdeg,{-1}}(\mesh)
 \defas
 \{ V:\overline\Omega\to\R \mid
   \forall\elm\in\mesh\ V_{|\elm}\in\Poly{\pdeg}{\elm} \}
\]
of all functions that are piecewise polynomial over $\mesh$ and the
space
\[
 \Splines^{\pdeg,0}(\mesh)
 \defas
 \{ V\in\Splines^{\pdeg,{-1}} \mid
   V\in C^0(\overline\Omega) \}
\]
of all functions that are in addition continuous.

\begin{prop}[Characterization of $H^1$- and $H^1_0$-conformity]
\label{P:conforming}
  A piecewise polynomial function $V\in\Splines^{\pdeg,{-1}}(\mesh)$ is
  in $\Hil{1}(\Omega)$ if and only if it is continuous in
  $\overline\Omega$.  Moreover, a continuous piecewise polynomial
  function $V\in\Splines^{\pdeg,0}(\mesh)$ is in $\Hil{1}_0(\Omega)$
  if and only if $V_{|\partial\Omega}=0$.
\end{prop}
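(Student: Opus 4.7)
The proof decomposes into two parts; the $\Hil1$-conformity characterization is substantive, the $\Hil1_0$-characterization then follows immediately.

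The plan for the first part is to derive, for every $V\in\Splines^{\pdeg,-1}(\mesh)$ and every $\phi\in C_c^\infty(\Omega)$, the element-wise integration-by-parts identity
\[
  \int_\Omega V\,\partial_i\phi\,dx
  = -\int_\Omega \widetilde g_i\,\phi\,dx
    + \sum_{\Face} \int_\Face [V]_\Face\,\phi\,(n_\Face)_i\,ds,
\]
where $\widetilde g$ is the piecewise classical gradient defined by $\widetilde g|_{\elm^\circ}\defas\grd(V|_\elm)$, the sum runs over the internal $(d-1)$-faces of $\mesh$ with a fixed orientation $n_\Face$, and $[V]_\Face$ is the jump of the polynomial restrictions across $\Face$. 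This is obtained by applying the divergence theorem on each element: contributions from boundary faces vanish since $\phi$ is compactly supported in $\Omega$, while on each internal face the two element contributions combine into a single jump term because the outward normals are opposite. Equivalently, $\partial_i V$ equals, as a distribution, the $\Leb2$-function $\widetilde g_i$ plus the surface measure $-\sum_\Face [V]_\Face(n_\Face)_i\,\mathcal{H}^{d-1}|_\Face$ on the $(d-1)$-skeleton.

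Both implications of the first statement are then visible. If $V$ is continuous in $\overline\Omega$, every jump $[V]_\Face$ vanishes, so $\partial_i V=\widetilde g_i\in\Leb2(\Omega)$ and $V\in\Hil1(\Omega)$. Conversely, if $V\in\Hil1(\Omega)$ then each $\partial_i V$ is represented by an $\Leb2$-function; since the Lebesgue decomposition of a distribution is unique, the singular surface-measure part must vanish. To extract $[V]_\Face=0$ concretely, I would fix an internal face $\Face$, pick a point $x_0\in\Face$ disjoint from the lower-dimensional subfaces, and insert test functions $\phi_\eps(x)=\chi(x_\parallel)\,\eta_\eps(x_\perp)$ of tensor-product type with $\eta_\eps$ a mollifier in the direction normal to $\Face$; as $\eps\to0$ the $\Leb2$ pairing of $\widetilde g_i-\partial_iV$ with $\phi_\eps$ vanishes while the surface integral converges to $(n_\Face)_i\int_\Face [V]_\Face\chi\,ds$, and choosing $i$ with $(n_\Face)_i\neq0$ together with arbitrary $\chi$ forces $[V]_\Face\equiv0$. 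Agreement of polynomial restrictions on every $(d-1)$-face, together with the standard star-connectedness of a conforming simplicial mesh (any two elements sharing a lower-dimensional subface are linked by a chain of $(d-1)$-face-adjacent elements containing that subface), then yields a continuous representative on $\overline\Omega$.

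The second statement is an immediate corollary of the first. Continuity of $V\in\Splines^{\pdeg,0}(\mesh)$ already yields $V\in\Hil1(\Omega)$, so only the boundary condition in~\eqref{D:H10} remains to be matched; for a continuous polynomial piece the element-wise trace defined in the paper coincides with pointwise evaluation on $\partial\Omega\cap\partial\elm$, so $V|_{\partial\Omega}=0$ in the paper's sense is the same as the pointwise vanishing of the continuous $V$ on $\partial\Omega$, and both implications follow. The main obstacle in the whole argument is the localization step separating an $\Leb2$-distribution from a singular surface measure; the remaining ingredients are the element-wise divergence theorem and a standard combinatorial fact about conforming simplicial meshes.
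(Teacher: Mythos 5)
Your argument is correct and takes a more self-contained route than the paper, which disposes of the first equivalence by citing \cite[Chapter II, Theorem 5.1]{Braess:01} and then reads off the second equivalence from definition~\eqref{D:H10}. What you write out is essentially the standard proof behind the cited theorem: express $\partial_i V$ as the regular piecewise gradient $\widetilde g_i$ minus a surface measure carrying the face jumps $[V]_\Face$, then use that an $\Leb{2}$ distributional derivative cannot have a nontrivial singular part (your mollifier localization is a fine way to make the Lebesgue-decomposition step concrete). The trade is the usual one: the paper is terse but outsources the argument, while you give a reader everything in-line. Your handling of the second statement agrees with the paper.

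One point deserves a flag. In the final step you invoke ``the standard star-connectedness of a conforming simplicial mesh'' to propagate agreement on $(d-1)$-faces to agreement at lower-dimensional faces, presenting it as automatic. It is not: the paper itself points out in \S\ref{S:ConformityImpact}, around Figure~\ref{F:edge-connectedness}, that while $(d-1)$-face-connectedness of stars always holds at interior nodes, it can fail at boundary nodes (e.g.\ where the boundary pinches at a point), and the paper introduces it there as an explicit additional assumption on $\mesh$. In the direction ``$V\in\Hil1(\Omega)\Rightarrow V$ continuous on $\clos\Omega$'' this connectedness is precisely what you need at boundary nodes of the $(d-2)$-skeleton, and at a pinched boundary vertex the step genuinely fails: the polynomial restrictions from the two sides need not agree there, yet $V$ is still in $\Hil1(\Omega)$ since the offending set has vanishing $(d-1)$-measure. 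The proposition and the Braess citation quietly presuppose a domain for which this does not occur (polyhedral, or at least with $(d-1)$-face-connected stars), so your gap mirrors the paper's; but it should be stated as an assumption rather than as a standard property of conforming meshes.
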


\begin{proof}
  The first equivalence is a consequence of
  \cite[Chapter II, Theorem 5.1]{Braess:01}, while then the second one
  immediately follows from definition \eqref{D:H10}.
\end{proof}

Hence the requirements $V\in C^0(\overline\Omega)$ and $V_{|\partial\Omega}=0$ 
are sufficient and necessary for conformity of the approximants.  Clearly, the 
first requirement $V\in C^0(\overline\Omega)$ is independent from the 
considered boundary condition $v_{|\partial\Omega}$; see also Corollaries 
\ref{C:Dirichlet} and \ref{C:Neumann} below. 

%
\medskip Next, we recall the Lagrange basis of $\Splines^{\pdeg,0}(\mesh)$, 
$\pdeg\in\N$.  Using multi-index notation, the Lagrange nodes of order $\pdeg$ 
of a $k$-simplex $\Simplex=\Conv\{a_0,\dots,a_k\}$ in $\Rd$ are given by
\[
 \LgrNodes{\pdeg}(\Simplex)
 \defas
 \left\{
  \frac1\pdeg \sum_{i=1}^k \alpha_i a_i \mid
   \alpha=(\alpha_0,\dots,\alpha_k)\in\N_0^{k+1}, \
   \MIdxOrd{\alpha}=\pdeg
  \right\}.
\]
These nodes determine $\Poly{\pdeg}{\Simplex}$ in that there holds
\[
 P_{|\LgrNodes{\pdeg}(\Simplex)}=Q_{|\LgrNodes{\pdeg}(\Simplex)}
\implies
 P=Q \text{ on }\Simplex
\]
for any $P,Q\in\Poly{\pdeg}{\Simplex}$.  Moreover, they are compatible with the 
dimensional structure of a $d$-simplex: if $\Face$ is a $k$-face of $\Simplex$, 
then
\begin{equation}
\label{LagrangeCompatability}
 \LgrNodes{\pdeg}(\Simplex)\cap\Face=\LgrNodes{\pdeg}(\Face).
\end{equation}
The Lagrange nodes of the space $\Splines^{\pdeg,0}(\mesh)$ are
\[
 \LgrNodes{\pdeg}(\mesh)
 \defas
 \bigcup_{\elm\in\mesh} \LgrNodes{\pdeg}(\elm);
\]
notice that a node $z\in\LgrNodes{\pdeg}(\mesh)$ may be shared by
several elements $K\in\mesh$.

Using the conformity of $\mesh$ and \eqref{LagrangeCompatability}, the Lagrange 
basis $\{\LgrBFct_z\}_{z\in\LgrNodes{\pdeg}(\mesh)}$ of 
$\Splines^{\pdeg,0}(\mesh)$ is characterized by
\begin{equation}
\label{LgrBasis}
 \LgrBFct_z\in\Splines^{\pdeg,0}(\mesh)
\quad\text{and}\quad
 \forall y\in\LgrNodes\pdeg(\mesh)\ 
 \LgrBFct_z(y) = \delta_{yz}
\end{equation}
and, for any $V\in\Splines^{\pdeg,0}(\mesh)$, provides the representation
\begin{equation*}
  V = \sum_{z\in\LgrNodes{\pdeg}(\mesh)} V(z) \LgrBFct_z
\end{equation*}
in terms of its nodal values $\{V(z)\}_{z\in\LgrNodes{\pdeg}(\mesh)}$. Each 
basis function $\LgrBFct_z$ is locally supported.  More precisely, the support
$\supp\LgrBFct_z\defas\overline{\{x\in\Omega\mid\LgrBFct_z\neq0\}}$, which 
corresponds to the domain of influence of the degree of freedom
$V \mapsto V(z)$, verifies
\[
 \supp\LgrBFct_z
 =
 \omega_z\
 \defas
 \bigcup_{\elm\in\mesh:\elm\ni z} \elm.
\]
It is useful to determine the scaling behavior of norms of the Lagrange basis 
functions.  To this end, we associate nodes of a given element $\elm\in\mesh$ 
to the ones of the reference simplex $\Simplexd=\Conv\{0,e_1,\dots,e_d\}$ in 
$\Rd$ in the following manner. Given $z\in\LgrNodes{\pdeg}(\elm)$, write 
$z=\sum_{i=0}^d\lambda_i a_i$ as a convex combination of the vertices of 
$\elm$, rearrange the coefficients in decreasing order such that
$\lambda_0^*\geq\dots\geq\lambda_d^*$ and set
\begin{equation}
\label{RefNode}
 \Hat{z}\defas\sum_{i=1}^d \lambda_i^* e_i
 \in
 \LgrNodes{\pdeg}(\Simplexd).
\end{equation}
Notice that, although the rearrangement may be not unique, $\Hat z$ is
well-defined and does not depend on the enumeration of the vertices of $\elm$.  
Fixed one node $z\in\LgrNodes{\pdeg}(\elm)$, there exists a (not necessarily 
unique) affine mapping $A:\Rd\to\Rd$ such that $A(\Simplexd)=\elm$ and $A(\Hat 
z)=z$.  Hence the transformation rule and $\vol{\Simplexd}=1/d!$ leads to
\begin{equation}
\label{LebnormBFct2}
 \Lebnorm{\LgrBFct_z}2{\elm}
 =
 \sqrt{d!}\vol{\elm}^{\frac12} \Lebnorm{\Hat\LgrBFct_{\Hat z}}2{\Simplexd}
\end{equation}
where $\vol{\elm}$ is the $d$-dimensional Lebesgue measure of $\elm$ and 
$\Hat\LgrBFct_{\Hat z}\in\Poly{\pdeg}{\Simplexd}$ is the polynomial that is 1 
at $\Hat{z}$ and vanishes at the other Lagrange nodes 
of $\Simplexd$.

Furthermore, it is useful to extend the linear functionals
\begin{equation}
\label{PointEVal}
 \delta_z : 
 \Splines^{\pdeg,0}(\mesh) \ni V
 \mapsto
 V(z) \in \R
\end{equation}
to functions in $\Hil{1}(\Omega)$.  To this end, we invoke the construction of 
L.\ R.\ Scott and S.\ Zhang \cite{Scott.Zhang:90}, which exploits the trace 
theorem \eqref{TraceOp}.  Denote the set of all $(d-1)$-faces of $\mesh$ by 
$\Faces(\mesh)$ and fix a $(d-1)$-dimensional face $F\in\Faces(\mesh)$.  Let 
$\{\DBFct^F_z\}_{z\in\LgrNodes{\pdeg}(\Face)}$ be the $\Leb{2}(\Face)$-Riesz 
representations in $\Poly{\pdeg}{\Face}$ of the functionals 
$\{\delta_z^\Face\}_{z\in\LgrNodes{\pdeg}(\Face)}$ given by 
$\delta_z^\Face(P)=P(z)$ for all $P\in\Poly{\pdeg}{\Face}$.  Since the latter 
functionals are the basis of $\Poly{\pdeg}{\Face}'$ that is dual to 
$\{\LgrBFct_z{}_{|\Face}\}_{z\in\LgrNodes{\pdeg}(\Face)}$, this amounts to 
defining $\{\DBFct_z^\Face\}_{z\in\LgrNodes{\pdeg}(\Face)}$ by
\begin{equation}
\label{DBFct}
 \DBFct_z^\Face\in\Poly{\pdeg}{\Face}
\quad\text{and}\quad
 \forall y\in\LgrNodes{\pdeg}(\Face) \ 
  \int_F \LgrBFct_y \DBFct_z^\Face = \delta_{yz}.
\end{equation}
In order to state the counterpart of \eqref{LebnormBFct2}, use
\eqref{RefNode} to associate a given node
$z\in\LgrNodes{\pdeg}(\Face)$ of some face $\Face\in\Faces(\mesh)$ to
a node of the reference simplex $\Simplex_{d-1}$ in $\R^{d-1}$.  Upon
identifying $\R^{d-1}$ with $\R^{d-1}\times\{0\}\subset\Rd$, this
gives the same point as applying \eqref{RefNode} for some element
$\elm$ containing $\Face$.  Again the transformation rule leads to
\begin{equation}
\label{LebnormDBFct2} 
 \Lebnorm{\DBFct_z}2{\Face}
 =
 \frac{1}{\sqrt{(d-1)!}}\area{\Face}^{-\frac12}
 \Lebnorm{\Hat\DBFct_{\Hat z}}2{\Simplex_{d-1}},
\end{equation}
where $\area{\Face}$ is the $(d-1)$-dimensional Hausdorff measure of
$\Face$ and $\Hat\DBFct_{\Hat z}$ the $L^2(\Simplex_{d-1})$-dual basis
function corresponding to $\Hat\LgrBFct_{\Hat z}$.  Since
$\LgrBFct_y{}_{|\Face}=0$ for each
$y\in\LgrNodes{\pdeg}(\mesh)\setminus\LgrNodes{\pdeg}(\Face)$, the
orthogonality condition in \eqref{DBFct} extends to
\[
 \forall y\in\LgrNodes{\pdeg}(\mesh)
\quad 
 \int_F \LgrBFct_y \DBFct_z^\Face = \delta_{yz}.
\]
Given $z\in\LgrNodes{\pdeg}(\Face)$, this yields
\[
 \forall V\in\Splines^{\pdeg,0}(\mesh)
\quad 
 V(z) = \NodalVar_{z;\Face}(V)
\]
where $\NodalVar_{z;\Face}$ is defined on $\Hil{1}(\Omega)$ by
\begin{equation}
\label{ScottZhangFct}
 \NodalVar_{z;\Face}(v)
 \defas
 \int_F v \DBFct_z^\Face.
\end{equation}
Thus, given a node $z\in\LgrNodes{\pdeg}(\mesh)$ of
$\Splines^{\pdeg,0}(\mesh)$, any $\NodalVar_{z;\Face}$ with $\Face \ni
z$ extends the linear functional \eqref{PointEVal} to
$\Hil{1}(\Omega)$. Finally, we note that if $z\in\partial\Omega$,
then the implication
\[
 v\in\Hil{1}_0(\Omega)
 \implies
 N_{z;F}(v) = 0
\]
is ensured whenever there hold $F\ni z$ and $F\subset\partial\Omega$ is a 
boundary face.

\section{Conformity and approximation error}
\label{S:ConformityImpact}
Proposition \ref{P:conforming} determines conditions that characterize when 
piecewise polynomial functions are conforming.  The main goal of this section is 
to analyse the impact of these conditions on the error when approximating the 
gradient of a function.

\subsection{Global and local best errors}
\label{S:best-errors}
%
%
We first provide a notion that measures the possible downgrading resulting from
conformity.  In order to take into account boundary conditions, we consider the 
following setting for the space $X$ of target functions and the approximants 
$\Splines(\mesh)$ over a mesh $\mesh$.  Assume that $X$ and $\Splines(\mesh)$ 
are, respectively, closed affine subspaces of $\Hil1(\Omega)$ and 
$\Splines^{\pdeg,0}(\mesh)$ with $\pdeg\geq1$ and that the $\Hil1$-seminorm
\begin{equation}
\label{H1seminorm}
 \error{w}{\Omega}
 \defas
 \Lebnorm{|\grd w|}2\Omega
 =
 \left( \int_\Omega |\grd w|^2 \right)^{\frac12}.
\end{equation}
is definite on $X-\Splines(\mesh)$.   The setting $X=\Hil1_0(\Omega)$ and
$\Splines(\mesh)=\Splines^{\pdeg,0}_0(\mesh)$ in the introduction 
\S\ref{S:Intro} is an example.

The best (possible) error of approximating $v\in X$ is then given by
\[
 E(v,\mesh)
 \defas
 E\big( v,\Splines(\mesh) \big)
 \defas
 \inf_{V\in\Splines(\mesh)} \error{v-V}{\Omega}
\]
and $E(v,\mesh)=0$ implies $v\in\Splines(\mesh)$. Thanks to, e.g., the
Projection Theorem in Hilbert spaces, there exists a unique best
approximation $V_\mesh$ such that
\[
 \error{v-V_\mesh}{\Omega}
 =
 E(v,\mesh).
\]
If $S(\mesh)$ is a linear space, this is  equivalent to
\[
 \forall W\in\Splines(\mesh)
\quad
 \int_\Omega \grd V_\mesh \cdot \grd W
 =
  \int_\Omega \grd v \cdot \grd W.
\]
and $V_\mesh$ is called the Ritz projection of $v$ onto $\Splines(\mesh)$.

Similarly, on each single element $\elm\in\mesh$, the best error is
given by
\[
 e(v,\elm)
 \defas
 e\big( v,\Poly{\pdeg}{\elm} \big)
 \defas
 \inf_{P\in\Poly{\pdeg}{\elm}} \error{v-P}{\elm},
\]
which depends only on the local gradient $(\grd v)_{|\elm}$ of the
target function and the shape functions associated with $\elm$.
Applying the Projection Theorem in the Hilbert space
$\Hil{1}(\elm)/\R$, we see that here best approximations also exist
but are only unique up to a constant.  Let $P_\elm$ be the best
approximation that has the same mean value on $\elm$ as the target
function $v$.  In other words, $P_\elm$ is characterized by
\begin{equation}
\label{Pelm}
 P_\elm\in\Poly{\pdeg}{\elm},
\quad
 \int_\elm P_\elm = \int_\elm v
\quad\text{and}\quad
 \error{v-P_\elm}{\elm}=e(v,\elm),
\end{equation}
the latter being equivalent to
\begin{equation}
\label{Pelm:ort}
 \forall Q\in\Poly{\pdeg}{\elm}
\quad
 \int_\elm \grd P_\elm \cdot \grd Q
 =
 \int_\elm \grd v \cdot \grd Q.
\end{equation}

Since the local best errors cannot be overtaken by any global approximation, 
one may expect that an appropriate `sum' of them provides a lower bound for 
corresponding global best errors.  In fact, if $\Splines'$ is any subspace of 
$\Splines^{\pdeg,-1}(\mesh)$, one readily verifies
\begin{align}
\label{local<=global}
 \left[
  \sum_{\elm\in\mesh} e(v,\elm)^2
 \right]^{\frac12}
 \leq
  E(v,\Splines'),
\end{align}
interpreting the right-hand side in a broken manner if necessary. Notice that 
there holds equality for $\Splines'=\Splines^{\pdeg,-1}(\mesh)$, while for 
$\Splines'=\Splines(\mesh)\subset\Splines^{\pdeg,0}(\mesh)$ the question arises 
if the requirement of continuity entails some downgrading of the approximation 
quality: the local approximants $P_\elm$, $\elm\in\mesh$, on the left-hand side 
are decoupled, while their counterparts $V_\mesh{}_{|\elm}$, $\elm\in\mesh$, on 
the right-hand side are coupled and so constrained.

In order to measure the possible downgrading, consider the inequality opposite 
to \eqref{local<=global} and denote by $\DcConst(\mesh)$ the smallest constant 
$C$ such that
\begin{equation}
\label{global<=Clocal}
 \forall v\in X
\quad
 E\big( v,\Splines(\mesh) \big)
 \leq
 C  \left[
  \sum_{\elm\in\mesh} e(v,\elm)^2
 \right]^{\frac12}
\end{equation}
is valid; if there is no such constant $C$, set $\DcConst(\mesh) = \infty$.  We 
refer to $\DcConst(\mesh)$ as the decoupling coefficient of $\Splines(\mesh)$.  
If the decoupling coefficient is big, or even $\infty$, requiring continuity
entails that the local approximation potential is not well exploited, at least
for some target functions.  If it is moderate, or even 1, dispensing with 
continuity does not improve the approximation quality in a substantial manner.

It is instructive to consider, for a moment, \eqref{global<=Clocal} with
$X=\Leb2(\Omega)$ and to replace the $\Hil1$-seminorm by the $\Leb{2}$-norm.
Then a function from
$\Splines^{\pdeg,-1}(\mesh)\setminus\Splines^{\pdeg,0}(\mesh)$ is an
admissible target function and we immediately obtain that there holds
$\DcConst(\mesh)=\infty$ in this case.

The following lemma introduces the key property of the $\Hil1$-seminorm
that ensures a finite decoupling coefficient.
\begin{lem}[Trace and error norm]
\label{L:TraceAndError}
Let $\Face$ be a $(d-1)$-face of a $d$-simplex $\Simplex$.  For any
$w\in\Hil1(\Simplex)$ with $\int_\Simplex w = 0$, there holds
\[
 \Lebnorm{w}2\Face
 \leq
 \CTr \left(
   \frac{h_\Simplex \area{\Face}}{\vol{\Simplex}}
  \right)^{\frac12}
  h_\Simplex^{\frac12} \Lebnorm{\grd w}2\Simplex,
\]
where $\CTr\defas\sqrt{C_P(C_P+2/d)}$ and $C_P$ is the optimal Poincar\'e 
constant for $d$-simplices.  The quotient between the parathensis is bounded in 
terms of the shape coefficient $h_\Simplex/\rho_\Simplex$ of $\Simplex$.
\end{lem}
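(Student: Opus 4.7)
The plan is to derive the bound through a divergence-theorem identity with a carefully chosen vector field, and then invoke the Poincar\'e inequality (available because $\int_\Simplex w = 0$) to convert the $L^2$-norms of $w$ on $\Simplex$ into $L^2$-norms of $\grd w$.

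Concretely, let $a$ be the vertex of $\Simplex$ opposite $\Face$ and set $\phi(x) \defas x - a$. This field is tailored to the situation by three properties: first, $\operatorname{div}\phi = d$; second, on every $(d-1)$-face $\Face'\neq\Face$ of $\Simplex$ the vertex $a$ lies in $\Face'$, hence $\phi$ is tangent to $\Face'$ and $\phi\cdot n_{\Face'}=0$; third, on $\Face$ itself $\phi\cdot n_\Face$ equals the height of $\Simplex$ from $a$, namely $d\vol{\Simplex}/\area{\Face}$. Moreover $|\phi|\leq h_\Simplex$ on all of $\Simplex$. Applying the divergence theorem to $w^2\phi$ then yields the identity
\[
 \frac{d\vol{\Simplex}}{\area{\Face}}\Lebnorm{w}{2}{\Face}^2
 \;=\;
 2\int_\Simplex w\,\grd w\cdot\phi \;+\; d\,\Lebnorm{w}{2}{\Simplex}^2.
\]

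Next I would bound the mixed term on the right by Cauchy-Schwarz together with $|\phi|\leq h_\Simplex$, and apply the Poincar\'e inequality $\Lebnorm{w}{2}{\Simplex}\leq C_P h_\Simplex\Lebnorm{\grd w}{2}{\Simplex}$ to each occurrence of $\Lebnorm{w}{2}{\Simplex}$. The right-hand side then collapses to $(2C_P + d\,C_P^2)\,h_\Simplex^2\,\Lebnorm{\grd w}{2}{\Simplex}^2 = d\,C_P(C_P+2/d)\,h_\Simplex^2\,\Lebnorm{\grd w}{2}{\Simplex}^2$, so isolating $\Lebnorm{w}{2}{\Face}^2$ and taking the square root yields precisely the inequality with $\CTr = \sqrt{C_P(C_P+2/d)}$. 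For the remaining claim about the parenthesis, the classical identity $\sum_i 1/h_i = 2/\rho_\Simplex$ (where the $h_i$ are the heights of $\Simplex$; this follows from $d\vol{\Simplex} = r\sum_i\area{F_i}$ upon recalling $\rho_\Simplex = 2r$) gives height from $a$ at least $\rho_\Simplex/2$, so $h_\Simplex\area{\Face}/\vol{\Simplex}\leq 2d\,h_\Simplex/\rho_\Simplex$, which is a shape-regularity quantity.

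I do not anticipate a serious obstacle: the only non-routine ingredient is the choice of $\phi$ that makes the boundary integral collapse onto $\Face$ alone, after which the sharp constant $C_P(C_P+2/d)$ emerges by direct substitution, with no need for any Young-type splitting of the mixed product.
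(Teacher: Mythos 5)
Your proof is correct and follows the same two-step strategy as the paper (a sharp trace inequality on $\Simplex$, then one application of the Poincar\'e inequality to absorb each $\Lebnorm{w}2\Simplex$), with the one difference that you re-derive the trace inequality from scratch via the divergence theorem applied to $w^2\phi$ with $\phi(x)=x-a$, whereas the paper imports it as a black box from Corollary 4.5 and Remark 4.6 of \cite{Veeser.Verfuerth:09}. Your divergence-theorem identity, divided by $d\vol{\Simplex}$ and bounded by Cauchy--Schwarz with $|\phi|\le h_\Simplex$, reproduces exactly the quoted trace inequality, so the two arguments coincide beyond the citation; this is indeed how that corollary is proved in the reference, and making it explicit is a legitimate and self-contained variant. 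The only minor deviation is in the final shape-regularity remark: the paper uses $\rho_\Simplex\le h^\perp_\Face$ (the inscribed ball of diameter $\rho_\Simplex$ must fit between $\Face$ and the opposite vertex) to get $h_\Simplex\area\Face/\vol\Simplex\le d\,h_\Simplex/\rho_\Simplex$, while your route via $\sum_i 1/h_i = 2/\rho_\Simplex$ only gives $h^\perp_\Face\ge\rho_\Simplex/2$ and hence the slightly weaker bound $2d\,h_\Simplex/\rho_\Simplex$; both suffice for the stated claim, but the paper's one-line observation is sharper.
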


The classical result of L.\ E.\ Payne and H.\ F.\ Weinberger
\cite{Payne.Weinberger:60}, see also M.\ Bebendorf \cite{Bebendorf:03}, 
ensures $C_P\leq1/\pi$.  In the case $d=2$, R.\ S.\ Laugesen and B.\ A.\ 
Siudeja \cite{Laugesen.Siudeja:10} show $C_P=1/{j_{1,1}}$ where 
$j_{1,1}\approx 3.8317$ denotes the first positive root of the Bessel function 
$J_1$. 

\begin{proof}
%
Corollary 4.5 and Remark 4.6 of \cite{Veeser.Verfuerth:09} imply the
trace inequality
\begin{equation*}
 \frac1{\area{\Face}} \Lebnorm{w}2\Face^2
 \leq
 \frac1{\vol{\Simplex}} \Lebnorm{w}2\Simplex^2
 +
 \frac{2h_\Simplex}{d\vol{\Simplex}} \Lebnorm{w}2\Simplex
  \Lebnorm{\grd w}2\Simplex.
\end{equation*}
We thus obtain the claimed inequality for $w$ by inserting the Poincar\'e 
inequality
\begin{equation*}
 \Lebnorm{w}2\Simplex
 \leq
 C_P h_\Simplex \Lebnorm{\grd w}2\Simplex,
\end{equation*}
which applies thanks to $\int_\Simplex w = 0$.

In order to bound the quotient between the parathensis, observe that 
Cavallieri's principle yields $\vol{\Simplex} = (h^\perp_\Face\area{\Face})/d$, 
where $h^\perp_\Face$ denotes the height of $\Simplex$ over $\Face$.  Hence we 
obtain
\[
 \frac{h_\Simplex \area{\Face}}{\vol{\Simplex}}
 =
 d\frac{h_\Simplex}{h^\perp_F}
 \leq
 d\frac{h_\Simplex}{\rho_\Simplex}
\]
with the help of the inequality $\rho_\Simplex \leq h^\perp_\Face$.
\end{proof}
%
The significance of Lemma \ref{L:TraceAndError} lies in the following 
observations.
If the intersection $\Face$ of two elements $K_1,K_2\in\mesh$ is a common 
$(d-1)$-face, then the condition $V\in C^0(\overline\Omega)$ requires that the 
traces $V_{|\partial\elm_1}$ and $V_{|\partial\elm_2}$ coincide on $\Face$.  
The local best approximation $P_{\elm_1}$ and $P_{\elm_2}$ are close to this 
property in that, thanks to Lemma \ref{L:TraceAndError}, their properly 
measured difference is bounded in terms of the local best errors:
\begin{align*}
 h_\Face^{-\frac12} \Lebnorm{P_{\elm_1}-P_{\elm_2}}2\Face
 &\leq
 h_\Face^{-\frac12} \Lebnorm{P_{\elm_1}-v}2\Face
  + 
  h_\Face^{-\frac12} \Lebnorm{v-P_{\elm_2}}2\Face
\\
 &\leq
 C \big[ e(v,\elm_1) + e(v,\elm_2) \big],
\end{align*} 
where $h_\Face\defas\diam\Face$ and $C$ depends on $d$ and on the shape 
coefficients of $\elm_1$ and $\elm_2$.  Consequently, the trace $V_{|\Face}$ 
can be defined by $P_{K_1}$, $P_{K_2}$ or a mixture of both without 
substantially downgrading the approximation potential of the shape functions of 
the two elements $\elm_1$ and $\elm_2$.  The same remark applies to near best 
approximations in place of $P_{K_1}$ and $P_{K_2}$.

Similary, Lemma \ref{L:TraceAndError} implies that, thanks to 
$v_{|\partial\Omega}=0$, properly measured traces on $\partial\Omega$ of local 
best approximations are bounded again in terms of local best errors or, in 
other words, almost vanish.  Consequently, enforcing vanishing boundary values 
will not downgrade the approximation quality.

\subsection{Interpolation}
%
In order to show that the decoupling coefficient $\DcConst(\mesh)$ is finite, 
we define
\[
 \Ip:\Hil{1}(\Omega)\to\Splines^{\pdeg,0}(\mesh)
\]
such that
\begin{equation}
\label{Ip;BdryVals}
 v\in X
 \implies
 \Ip v\in\Splines(\mesh)
\end{equation}
and
\begin{equation}
\label{GDcplng}
 \error{v-\Ip v}{\Omega}
 \leq
 C \left[
  \sum_{\elm\in\mesh} e(v,\elm)^2
 \right]^{\frac12}
\end{equation}
for some constant $C$, independent of $v$.  Notice that the latter
property requires that $\Ip$ is a projection whenever $\Splines(\mesh)
\subset X$ and stable with respect to \eqref{H1seminorm}.

\begin{subequations}
\label{DefIp}
\medskip Using the Lagrange basis \eqref{LgrBasis} of
$\Splines^{\pdeg,0}(\mesh)$, we can write
\[
 \Pi v
 =
 \sum_{z\in\LgrNodes{\pdeg}(\mesh)} \Ip_z v \, \LgrBFct_z
\]
and defining $\Ip$ amounts to choosing suitable linear functionals
$\Ip_z\in\Hil{-1}(\Omega)$, $z\in\LgrNodes{\pdeg}(\mesh)$, for the nodal
values.  For this purpose, it is convenient to introduce the following notion.
A node $z\in\LgrNodes{\pdeg}(\mesh)$ is called unconstrained in
$\Splines(\mesh)$ if and only if $\supp\LgrBFct_z$ is contained in one element
of $\mesh$ and there holds $\spn \LgrBFct_z\subset\Splines(\mesh)$.  Extending
functions on elements by zero, this is equivalent to requiring that there
holds $\spn \LgrBFct_z{}_{|\elm}\subset\Splines(\mesh)$ for any element
$\elm\in\mesh$.  Unotherwise $z$ is called constrained and we write
$z\in\Cnstr$.

Fix an arbitrary node $z\in\LgrNodes{\pdeg}(\mesh)$. If $z\not\in\Cnstr$, then
$\Ip_zv$ affects only the local error $\error{v - \Ip v}{\elm}$ on that
element.  In this case, set
\begin{equation}
\label{DefIp:unconstrained}
 \Ip_z v \defas P_\elm(z),
\end{equation}
where $P_\elm$ is given by \eqref{Pelm}.

Whereas, if $z\in\Cnstr$, then $\Ip_zv$ has to deviate from 
\eqref{DefIp:unconstrained} for at least one other element or has to assume a
prescribed value.  In order to meet both issues,  we employ the Scott-Zhang 
functionals \eqref{ScottZhangFct}.  To this end, fix some face 
$\Face_z\in\Faces(\mesh)$ containing $z$; this choice may be subject to
further conditions for certain examples of $\Splines(\mesh)$.  We then set
\begin{equation}
\label{DefIp:constrained}
 \Ip_z v
 \defas
 \NodalVar_{z;F_z}(v).
\end{equation}

We illustrate this definition in the setting of the introduction \S\ref{S:Intro}
where $X=\Hil1_0(\Omega)$ and $\Splines(\mesh)=\Splines^{\pdeg,0}_0(\mesh)$.
Here there holds
\[
 \Cnstr = \LgrNodes{\pdeg}(\mesh) \cap \Skeleton
\quad\text{with}\quad
 \Skeleton
 \defas
 \bigcup_{\Face\in\Faces(\mesh)} \Face
\]
and, if $z\in\Cnstr$, we addditionally require that 
$\Face_z\subset\partial\Omega$ whenever $z\in\partial\Omega$ lies on the
boundary.  This readily ensures \eqref{Ip;BdryVals}.  In Remark \ref{R:Ip}
below we further discuss the constuction of $\Ip$, comparing it with existing
interpolation operators and indicating alternatives.  In particular, we shall
see that, irrespective of the choice of $\Face_z$, the definition
\eqref{DefIp:unconstrained} is `near to the best \eqref{DefIp:constrained}' in
a suitable sense.
\end{subequations}

Notice that, on one hand, the face in \eqref{DefIp:constrained} is
linked to an arbitrary element of $\supp\LgrBFct_z$ only through the
node $z$ and, on the other hand, traces of $\Hil1$-functions are
well-defined only on at least $(d-1)$-dimensional faces.  The
following property of the mesh therefore appears to be essential for
local near best approximation properties of $\Ip$.  A star
\[
 \omega_z = \bigcup\{\elm\in\mesh : \elm\ni z\},
\qquad
 z\in\LgrNodes{\pdeg}(\mesh),
\]
is called $(d-1)$-face-connected if for any element $\elm$ and $(d-1)$-face
$\Face$ containing $z$ there exists a sequence $(\elm_i)_{i=1}^n$ such
that
\begin{itemize}
\item any $\elm_i$, $i=0,\dots,n$, is an element of the star,
\item any intersection $\elm_i\cap\elm_{i+1}$,
  $i=0,\dots,n-1$, is a $(d-1)$-face of the star,
\item $\elm_0$ contains $\Face_z$ and $\elm_n=\elm$.
\end{itemize}
This property holds for stars of interior nodes $z\in\Omega$.  The case of a 
boundary node $z\in\partial\Omega$ is more subtle.  It holds whenever the 
boundary is a Lipschitz graph in a sufficiently big neighborhood of $z$.  
Noteworthy, the latter property is not necessary; see, e.g., the slit domain 
and Figure \ref{F:edge-connectedness} (left).  Roughly speaking, it will be 
verified also in similar cases where the boundary can be accessed from both 
sides if that part is at least $(d-1)$-dimensional, otherwise it will not hold. 
Figure \ref{F:edge-connectedness} (right) illustrates such a counterexample 
for $d=2$.  
\begin{figure}[h]
 \centering
 \includegraphics[bb=176 37 650 505,
                  width=0.4\hsize,
                  height=20ex]{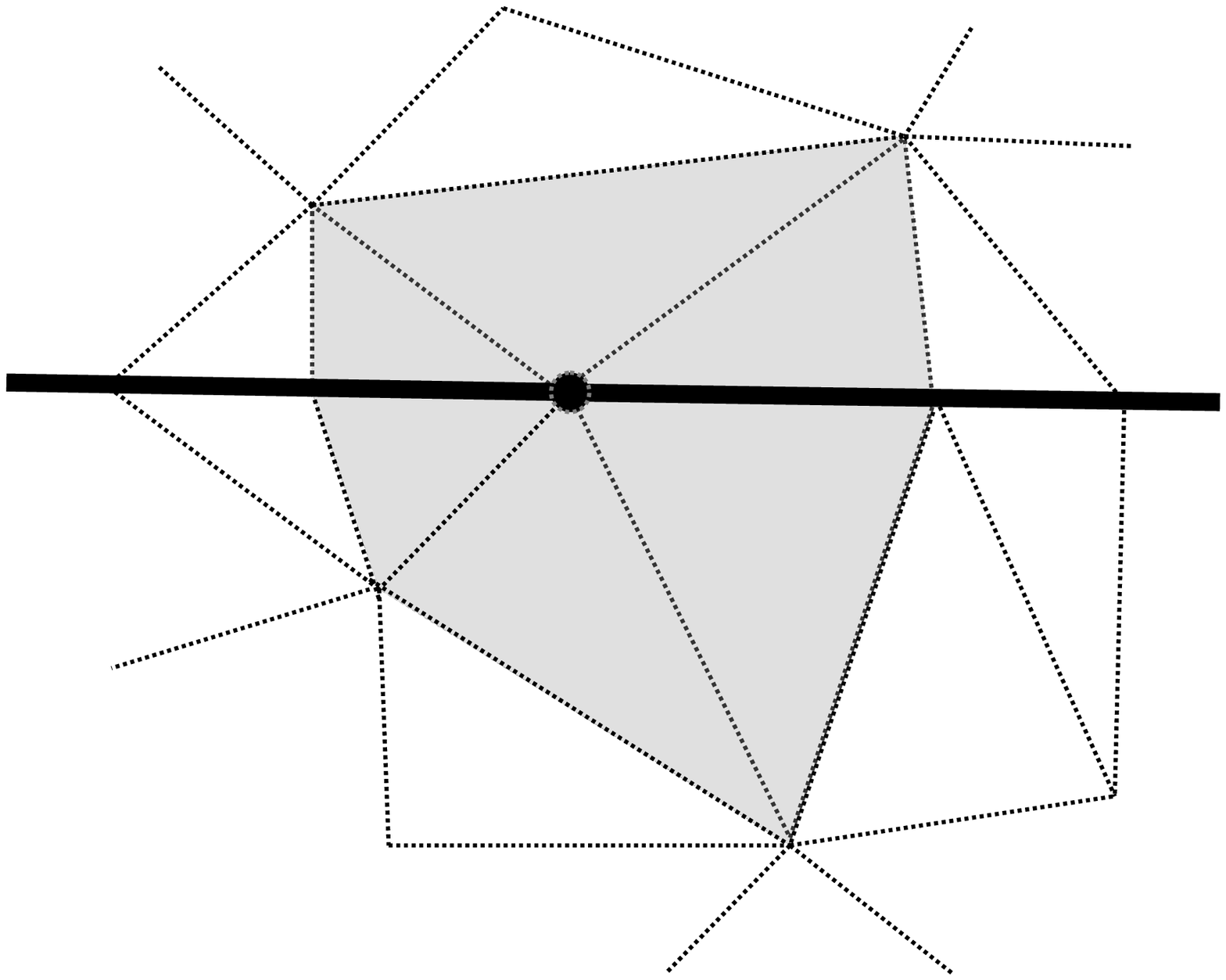}
\qquad\qquad
 \includegraphics[bb=176 37 650 505,
                  width=0.4\hsize,
                  height=20ex]{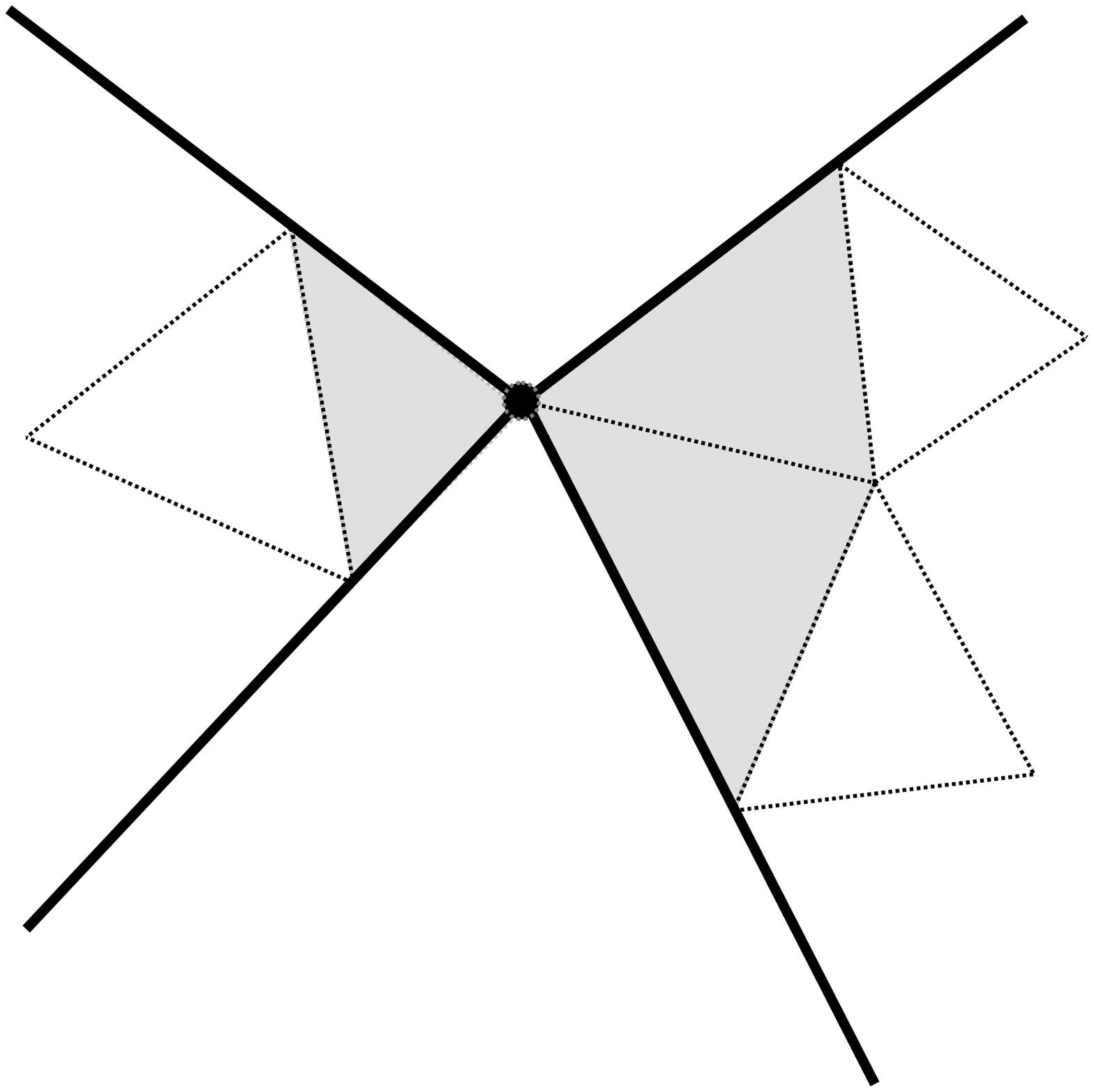}
 \caption{Edge-connectedness: two-dimensional stars (grey areas) at the 
boundary (thick lines), the left one is edge-connected, the right one is not.}
 \label{F:edge-connectedness}
\end{figure}
It is worth noting that, if a star is not $(d-1)$-face-connected, the $H^1$-norm 
is not strong enough to couple both sides across their common part of the 
boundary, suggesting that also corresponding elements should not be coupled in 
the finite element space.

\subsection{Local decoupling}
\label{S:ldecoupling}
%

The definition of $\Pi$ and the identity
\[
 \error{w}{\Omega}^2
 =
 \sum_{\elm\in\mesh}\error{w}{\elm}^2,
\]
suggest to show \eqref{GDcplng} by establishing local counterparts, involving
the patches
\[
 \omega_\elm
 \defas
 \bigcup \{\elm'\in\mesh:\elm'\cap\elm\neq\emptyset\}
\]
in $\mesh$.
\begin{thm}[Local decoupling]
\label{T:LDcplng}
Assume that the stars $\omega_z$, $z\in\LgrNodes{\pdeg}(\elm)$, associated with 
a given element $\elm\in\mesh$ are $(d-1)$-face-connected.  Then there exists a 
constant $\DcConst_\elm$ such that, for all $v\in\Hil1(\omega_\elm)$,
\begin{equation}
\label{LDcplng}
 \error{v - \Ip v}{\elm}^2
 \leq
 e(v,\elm)^2 
 +
 \DcConst_\elm^2
 \sum_{z\in\LgrNodes{\pdeg}(\elm)} \sum_{\elm'\ni z} e(v,\elm')^2,
\end{equation}
where the second sum is over all $\elm'\in\mesh$ containing $z$.  The constant 
$\DcConst_\elm$ depends on $d$, $\ell$, and the elements in $\omega_\elm$.
\end{thm}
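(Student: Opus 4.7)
The plan is to split $v-\Ip v$ on $\elm$ into two pieces that decouple in the $\Hil1$-seminorm, and then to control the polynomial piece node by node, using the $(d-1)$-face-connectedness to handle constrained nodes by a telescoping argument.

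First I would write, on $\elm$,
\[
 v-\Ip v = (v-P_\elm) + (P_\elm - \Ip v),
\]
with $P_\elm$ the local best approximation from \eqref{Pelm}. Since $(P_\elm-\Ip v)|_\elm\in\Poly{\pdeg}{\elm}$, the orthogonality \eqref{Pelm:ort} gives
\[
 \error{v-\Ip v}{\elm}^2 = e(v,\elm)^2 + \error{P_\elm-\Ip v}{\elm}^2,
\]
so it remains to bound the last term by the second term on the right-hand side of \eqref{LDcplng}. Expanding $P_\elm-\Ip v$ in the Lagrange basis on $\elm$ and invoking \eqref{LebnormBFct2} together with a standard norm-equivalence on $\Simplexd$, I obtain
\[
 \error{P_\elm-\Ip v}{\elm}^2
 \lesssim h_\elm^{d-2} \sum_{z\in\LgrNodes{\pdeg}(\elm)} |P_\elm(z)-\Ip_z v|^2,
\]
with constant depending only on $d$, $\pdeg$, and the shape coefficient of $\elm$. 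The task is now to estimate each nodal deviation.

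For an unconstrained node $z\in\LgrNodes{\pdeg}(\elm)$, the star $\omega_z$ is a single element, which must be $\elm$ itself, so \eqref{DefIp:unconstrained} yields $\Ip_z v=P_\elm(z)$ and the deviation vanishes. For a constrained node $z$, I use $(d-1)$-face-connectedness to pick a chain $\elm_0,\dots,\elm_n=\elm$ in $\omega_z$ with $\elm_0\supset F_z$ and $F_i\defas\elm_i\cap\elm_{i+1}$ a $(d-1)$-face of the star through $z$, and telescope:
\[
 P_\elm(z)-\Ip_z v
 = \bigl[P_{\elm_0}(z)-\NodalVar_{z;F_z}(v)\bigr]
 + \sum_{i=0}^{n-1}\bigl[P_{\elm_{i+1}}(z)-P_{\elm_i}(z)\bigr].
\]
The boundary bracket equals $\NodalVar_{z;F_z}(P_{\elm_0}-v)$ since the Riesz duality \eqref{DBFct} implies $\NodalVar_{z;F_z}(P_{\elm_0})=P_{\elm_0}(z)$; applying Cauchy--Schwarz, the scaling \eqref{LebnormDBFct2} for $\DBFct_z^{F_z}$, and Lemma \ref{L:TraceAndError} to $w=v-P_{\elm_0}$ (which has zero mean on $\elm_0$ by \eqref{Pelm}) bounds it by $|F_z|^{-1/2} h_{\elm_0}^{1/2} e(v,\elm_0)$ up to a shape-dependent constant. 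For each chain bracket, the restriction $(P_{\elm_{i+1}}-P_{\elm_i})|_{F_i}\in\Poly{\pdeg}{F_i}$ allows a polynomial inverse estimate $|(P_{\elm_{i+1}}-P_{\elm_i})(z)| \lesssim |F_i|^{-1/2}\norm{P_{\elm_{i+1}}-P_{\elm_i}}{2,F_i}$; the triangle inequality with $v$ inserted and two more applications of Lemma \ref{L:TraceAndError} produce a bound of the same form in terms of $e(v,\elm_i)$ and $e(v,\elm_{i+1})$.

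Finally, squaring, applying Cauchy--Schwarz to absorb the (finite) chain length, and using shape regularity to collapse the factors $|F|^{-1/2} h^{1/2}$ together with the prefactor $h_\elm^{d-2}$ in the polynomial inverse estimate, I obtain
\[
 |P_\elm(z)-\Ip_z v|^2
 \lesssim h_\elm^{-(d-2)} \sum_{\elm'\ni z} e(v,\elm')^2,
\]
since every chain element lies in $\omega_z$ and hence contains $z$. Summing over $z\in\LgrNodes{\pdeg}(\elm)$ and substituting into the first display yields \eqref{LDcplng} with $\DcConst_\elm$ depending on $d$, $\pdeg$, and the geometry of the patch $\omega_\elm$. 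The main obstacle is the telescoping step for constrained nodes: one must simultaneously track (i) that $z$ lies on every intermediate face $F_i$ so that point evaluation at $z$ makes sense on $F_i$, and (ii) that each $\elm_i$ appears among the elements of $\omega_z$ in the right-hand side of \eqref{LDcplng} — both are exactly what $(d-1)$-face-connectedness of the stars at the nodes of $\elm$ provides.
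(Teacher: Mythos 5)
Your proof is correct and takes essentially the same route as the paper: orthogonal split via $P_\elm$, expansion in the Lagrange basis where only constrained nodes contribute, a telescoping chain through the $(d-1)$-face-connected star at each such node, and term-by-term bounds combining the dual-basis/inverse-estimate scaling on faces with Lemma \ref{L:TraceAndError}. The minor differences (a Lagrange-basis norm equivalence versus triangle inequality plus Cauchy--Schwarz, and point-evaluation telescoping with an inverse estimate versus staying with the functionals $\NodalVar_{z;F_i}$) are cosmetic since $\NodalVar_{z;F}$ reproduces point values on polynomials and its $L^2$-norm gives precisely the inverse estimate you invoke.
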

\begin{proof}
We start by recalling the definition \eqref{Pelm} of the best approximation 
$P_\elm$ and exploit the orthogonality \eqref{Pelm:ort} to write
\[
 \error{v - \Ip v}{\elm}^2
 =
 e(v,\elm)^2 
 +
 \error{P_\elm - \Ip v}{\elm}^2,
\]
where the second square on the right hand side measures the deviation of $\Ip 
v$ of being locally optimal.  Thanks to \eqref{DefIp:unconstrained}, there holds
\[
 (P_\elm - \Ip v)_{|\elm}
 =
 \sum_{z\in\LgrNodes{\pdeg}(\elm)}
  \big[ P_\elm(z) - \Ip_z v \big] \LgrBFct_z{}_{|\elm}
 =
 \sum_{z\in\LgrNodes{\pdeg}(\elm)\cap\Cnstr}
  \big[ P_\elm(z) - \NodalVar_{z,\Face_z}(v) \big] \LgrBFct_z{}_{|\elm},
\]
which reveals that this deviation is entirely related to the requirements for 
conformity.  To bound it, we first apply the triangle inequality to obtain
\begin{equation}
\label{CplngErr}
 \error{P_\elm - \Ip v}{\elm}
 \leq
  \sum_{z\in\LgrNodes{\pdeg}(\elm)\cap\Cnstr}
  \big| P_\elm(z) - \NodalVar_{z,\Face_z}(v) \big|
   \Lebnorm{\grd\LgrBFct_z}2\elm.
\end{equation}
We proceed by bounding each term of the sum separately.  To this end,
fix any $z\in\LgrNodes{\pdeg}(\elm)\cap\Cnstr$ and, using that
$\omega_z$ is $(d-1)$-face-connected, choose a corresponding sequence
$(\elm_i)_{i=0}^n$ of elements connecting $\Face_z$ with $\elm$.  Set
$\Face_0\defas\Face_z$ and $\Face_i\defas\elm_{i-1}\cap\elm_i$ for
$i=1,\dots,n$.  For the sake of readibility, we sometimes replace
$\elm_i$ and $\Face_i$ by $i$.  Recalling the definition \eqref{DBFct}
of the local dual basis, we note
\[
 P_\elm(z)-\NodalVar_{z,\Face_z}(v)
 =
 \NodalVar_{z,n}(P_n)-\NodalVar_{z,0}(v)
\]
and
\[
 \forall i=0,\dots,n-1
\quad
 \NodalVar_{z,i}(P_i)
 =
 P_i(z)
 =
 \NodalVar_{z,i+1}(P_i),
\]
which, by telescopic expansion, leads to
\begin{equation}
\label{Telescopic}
 P_\elm(z)-\NodalVar_{z,\Face_z}(v)
 =
 \NodalVar_{z,0}(P_0-v)
 +
 \sum_{i=1}^n \NodalVar_{z,i}(P_i-P_{i-1}).
\end{equation}
Since $\Face_0\subset\elm_0$ and $\int_{\elm_0} (v-P_0) = 0$, one can
combine \eqref{LebnormDBFct2} and Lemma \ref{L:TraceAndError} to
derive
\begin{align*}
 |\NodalVar_{z,0}(v-P_0)|
 &=
 \left|
  \int_{\Face_0} (v-P_0) \DBFct^i_z
 \right|
 \leq
 \Lebnorm{v-P_0}2{\Face_0} \Lebnorm{\DBFct^0_z}2{\Face_0}
\\
 &\leq
 \frac{\CTr}{\sqrt{(d-1)!}}
 \Lebnorm{\Hat\DBFct_{\Hat z}}2{\Simplex_{d-1}}
 \frac{h_{0}}{\vol{\elm_0}^{\frac12}}
  e(v,\elm_0),
\end{align*}
where $\area{\Face_0}$ cancels out.  Similarly, for $i=1,\dots,n$, one
obtains
\begin{equation*}\begin{aligned}
 \NodalVar_{z,i}(P_i &- P_{i-1})
 \leq
 |\NodalVar_{z,i}(P_i - v)| + |\NodalVar_{z,i}(v - P_{i-1})|
\\
 &\leq
 \frac{\CTr}{\sqrt{(d-1)!}}
 \Lebnorm{\Hat\DBFct_{\Hat z}}2{\Simplex_{d-1}}
 \left[
  \frac{h_{i}}{\vol{\elm_i}^{\frac12}} e(v,\elm_i)
  +
  \frac{h_{i-1}}{\vol{\elm_{i-1}}^{\frac12}} e(v,\elm_{i-1})
 \right].
\end{aligned}\end{equation*}
Using these bounds in the telescopic expansion \eqref{Telescopic} gives
\[
 |P_\elm(z)-\NodalVar_{z,\Face_z}(v)|
 \leq
 \frac{2\CTr}{\sqrt{(d-1)!}}
 \Lebnorm{\Hat\DBFct_{\Hat z}}2{\Simplex_{d-1}}
 \sum_{i=0}^n 
  \frac{h_{i}}{\vol{\elm_i}^{\frac12}} e(v,\elm_i).
\]
In order to get independent of the specific choice of $\Face_z$, we
observe that the sequence $(\elm_i)_{i=0}^n$ does not allow a double
occurence of some element and that each element is a subset of
$\omega_z=\supp\LgrBFct_z$.  We therefore replace the preceding
inequality by
\begin{equation}\label{nbnval}
 |P_\elm(z)-\NodalVar_{z,\Face_z}(v)|
 \leq
 \frac{2\CTr}{\sqrt{(d-1)!}}
 \Lebnorm{\Hat\DBFct_{\Hat z}}2{\Simplex_{d-1}}
  \sum_{\elm'\subset\omega_z}
   \frac{h_{\elm'}}{\vol{\elm'}^{\frac12}}
   e(v,\elm'),
\end{equation}
where the sum is over all elements $\elm'\in\mesh$ with
$\elm'\subset\omega_z$.  Inserting this inequality and
\eqref{LebnormBFct2} into \eqref{CplngErr}, one arrives at
\begin{equation*}
 \error{P_\elm - \Ip v}{\elm}
 \leq
 2 \sqrt{d} \CTr
 \sum_{z\in\LgrNodes{\pdeg}(\elm)\cap\Cnstr}
 \sum_{\elm'\subset\omega_z}
  d_{\Hat z}
   \frac{h_{\elm'} \Lebnorm{\grd\LgrBFct_z}2\elm \vol{\elm}^{\frac12}}
   {\Lebnorm{\LgrBFct_z}2\elm \vol{\elm'}^{\frac12}}
   e(v,\elm')
\end{equation*}
with
\[
 d_{\Hat z}
 \defas
 \Lebnorm{\Hat\DBFct_{\Hat z}}2{\Simplex_{d-1}}
 \Lebnorm{\Hat\LgrBFct_{\Hat z}}2{\Simplexd}.
\]
Applying a Cauchy-Schwarz inequality, one obtains a sum of the squares
of the local best errors and the claimed inequality with
\begin{equation}
\label{locDcplngConst}
 \DcConst_\elm^2
 =
 4 d \CTr
 \sum_{z\in\LgrNodes{\pdeg}(\elm)\cap\Cnstr}
  d_{\Hat z}^2 \mu_z
  \frac{h_\elm^2 \Lebnorm{\grd\LgrBFct_z}{\elm}2^2}
   {\Lebnorm{\LgrBFct_z}{\elm}2^2}
\end{equation}
and
\[
 \mu_z
 \defas
 \frac{\vol{\elm}}{h_\elm^2}
 \sum_{\elm'\subset\omega_z}
  \frac{h_{\elm'}^2}{\vol{\elm'}}.
\] 
\end{proof}

Some remarks on the local decoupling constant $\DcConst_\elm$ in
\eqref{locDcplngConst} are in order.  Only $\mu_z$ and
$h_\elm \Lebnorm{\grd\LgrBFct_z}{\elm}2 /
\Lebnorm{\LgrBFct_z}{\elm}2$ depend on geometrical properties of the
elements in the patch $\omega_\elm$.  Both terms, independently, do
not change under rescalings of $\omega_\elm$.  While
$h_\elm \Lebnorm{\grd\LgrBFct_z}{\elm}2 /
\Lebnorm{\LgrBFct_z}{\elm}2$ corresponds to an inverse
estimate, $\mu_z$ multiplies the number of elements in the star around
$z$ with the mean value of the quotient
\[
 \frac{h_{\elm'}^2\vol{\elm}}{h_{\elm}^2\vol{\elm'}}.
\]
To conclude this subsection, let us consider the case where the patch
$\omega_\elm$ consists of elements whose shape coefficients are
uniformly bounded by $\sigma_\elm$:
\begin{equation}
\label{LocShapeReg}
 \forall \elm'\subset\omega_\elm
\quad
 \frac{h_{\elm'}}{\rho_{\elm'}}
 \leq
 \sigma_\elm.
\end{equation}
Then a standard scaling argument, see e.g.\ \cite[(4.5.3)]{Brenner.Scott:08} 
shows that
\[
 \frac{h_\elm \Lebnorm{\grd\LgrBFct_z}{\elm}2}
  {\Lebnorm{\LgrBFct_z}{\elm}2}
 \leq
 \sigma_\elm
 \frac{\Lebnorm{\grd\Hat\LgrBFct_{\Hat z}}{\Simplexd}2}
  {\Lebnorm{\Hat\LgrBFct_{\Hat z}}{\Simplexd}2}.
\]
Moreover, since the (solid) angles of the elements in $\omega_\elm$ are bounded 
away from 0 in terms of $\sigma_\elm$, the number of elements in each star of 
$\omega_K$ is bounded in terms of $\sigma_\elm$. Comparing elements having 
common faces, we thus obtain that the diameters and the volumes of the elements
in a star of $\omega_\elm$ are comparable up to $\sigma_\elm$.  Consequently, 
there holds $\mu_z\leq C(\sigma_\elm)$ for each $z\in\LgrNodes{\pdeg}(\elm)$ 
and the following lemma.

\begin{lem}[$\DcConst_\elm$ and shape regularity]
Under the assumption \eqref{LocShapeReg} there holds
\[
 \DcConst_\elm^2
 \leq
 4 d \CTr
 C(\sigma_\elm)
 \sum_{z\in\LgrNodes{\pdeg}(\elm)\cap\Cnstr}
  \Lebnorm{\Hat\DBFct_{\Hat z}}{\Simplex_{d-1}}2^2
  \Lebnorm{\grd\Hat\LgrBFct_{\Hat z}}{\Simplexd}2^2.
\]
\end{lem}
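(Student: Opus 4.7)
The plan is to substitute directly the formula for $\DcConst_\elm^2$ from Theorem \ref{T:LDcplng} and show that, under the shape-regularity assumption \eqref{LocShapeReg}, two reference quantities can be extracted from each summand while the remaining geometric factors collapse into a constant depending only on $\sigma_\elm$. Recall
\[
 \DcConst_\elm^2
 =
 4d\CTr
 \sum_{z\in\LgrNodes{\pdeg}(\elm)\cap\Cnstr}
  d_{\Hat z}^2\,\mu_z\,
  \frac{h_\elm^2\Lebnorm{\grd\LgrBFct_z}{\elm}2^2}{\Lebnorm{\LgrBFct_z}{\elm}2^2},
\qquad
 d_{\Hat z}^2
 =
 \Lebnorm{\Hat\DBFct_{\Hat z}}{\Simplex_{d-1}}2^2
 \Lebnorm{\Hat\LgrBFct_{\Hat z}}{\Simplexd}2^2.
\]

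First I would handle the quotient involving $\LgrBFct_z$. The standard scaling argument from \cite[(4.5.3)]{Brenner.Scott:08}, which is already cited in the paragraph before the lemma, yields
\[
 \frac{h_\elm\Lebnorm{\grd\LgrBFct_z}{\elm}2}{\Lebnorm{\LgrBFct_z}{\elm}2}
 \leq
 \sigma_\elm
 \frac{\Lebnorm{\grd\Hat\LgrBFct_{\Hat z}}{\Simplexd}2}{\Lebnorm{\Hat\LgrBFct_{\Hat z}}{\Simplexd}2}.
\]
Multiplying by $d_{\Hat z}^2$ causes the factor $\Lebnorm{\Hat\LgrBFct_{\Hat z}}{\Simplexd}2^2$ in $d_{\Hat z}^2$ to cancel against the denominator on the right, leaving precisely the two reference norms appearing in the claimed bound, multiplied by $\sigma_\elm^2$.

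Next I would bound $\mu_z$. Since $z$ is a node of some element in $\omega_\elm$ and the solid angles are bounded from below in terms of $\sigma_\elm$, the cardinality $\#\{\elm'\in\mesh:\elm'\subset\omega_z\}$ is bounded by a constant depending only on $\sigma_\elm$. For each $\elm'$ in that star, I would connect it to $\elm$ through a finite chain of elements in $\omega_z$ whose consecutive members share a $(d-1)$-face; shape regularity of two such face-sharing neighbors implies that their diameters and volumes are comparable up to a factor depending on $\sigma_\elm$, and iterating along the chain (whose length is also bounded by $\sigma_\elm$) gives
\[
 \frac{h_{\elm'}^2\vol{\elm}}{h_\elm^2\vol{\elm'}}
 \leq
 C(\sigma_\elm).
\]
Summing over $\elm'\subset\omega_z$ and using the cardinality bound produces $\mu_z\leq C(\sigma_\elm)$.

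Combining the two estimates and absorbing $\sigma_\elm^2$ into $C(\sigma_\elm)$ gives the stated inequality, since $\Lebnorm{\Hat\DBFct_{\Hat z}}{\Simplex_{d-1}}2^2\,\Lebnorm{\grd\Hat\LgrBFct_{\Hat z}}{\Simplexd}2^2$ is exactly what remains in each summand. The only nontrivial step is the bound on $\mu_z$: it is really a bookkeeping argument about how shape regularity of individual elements propagates through the star, but one has to be careful that nodes of $\elm$ may lie on low-dimensional subfaces where many elements meet, so the chaining has to be carried out within $\omega_z$ rather than $\omega_\elm$. All the rest is pure algebra once the scaling inequality is in place.
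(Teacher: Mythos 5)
Your proposal follows the paper's own argument step for step: the scaling inequality for the Lagrange basis function quotient (from Brenner--Scott), the cancellation of the squared reference $L^2$-norm of $\Hat\LgrBFct_{\Hat z}$ appearing inside $d_{\Hat z}^2$, and the bound $\mu_z\leq C(\sigma_\elm)$ obtained from the shape-regularity-controlled cardinality of the star together with comparability of diameters and volumes of face-sharing neighbors. Your explicit chaining argument within $\omega_z$ is a slightly more careful unpacking of the paper's terse remark about ``comparing elements having common faces,'' but the substance is the same.
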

\subsection{Global decoupling and boundary conditions}
Summing up the inequalities of Theorem \ref{T:LDcplng}, we obtain our main 
result.

\begin{thm}[Decoupling of elements]
\label{T:elm-dec}
Assume that all stars of the mesh $\mesh$ are $(d-1)$-face-connected.  Then 
there exists a constant $C$ such that
\begin{equation*}
 \error{v-\Ip v}{\Omega}
 \leq
 C \left[
  \sum_{\elm\in\mesh} e(v,\elm)^2
 \right]^{\frac12}
\end{equation*}
for all $v\in\Hil1(\Omega)$.  The constant $C$ depends on the dimension $d$, 
the polynomial degree $\pdeg$ and the shape coefficient $\sigma_\mesh \defas 
\max_{\elm\in\mesh}  h_\elm/\rho_\elm$ of $\mesh$.
\end{thm}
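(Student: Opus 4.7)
The proof is essentially a summation and bookkeeping argument built on Theorem \ref{T:LDcplng}. Since all stars of $\mesh$ are $(d-1)$-face-connected by hypothesis, Theorem \ref{T:LDcplng} applies on every $\elm\in\mesh$, and combining the identity $\error{v-\Ip v}{\Omega}^2=\sum_{\elm\in\mesh}\error{v-\Ip v}{\elm}^2$ with \eqref{LDcplng} yields
$$\error{v-\Ip v}{\Omega}^2 \leq \sum_{\elm\in\mesh} e(v,\elm)^2 + \sum_{\elm\in\mesh} \DcConst_\elm^2 \sum_{z\in\LgrNodes{\pdeg}(\elm)}\sum_{\elm'\ni z} e(v,\elm')^2.$$

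The next step is to bound the local decoupling constants $\DcConst_\elm$ uniformly in $\elm$. The preceding lemma provides exactly this: the assumption $\max_{\elm\in\mesh} h_\elm/\rho_\elm\leq\sigma_\mesh$ entails the local shape-regularity condition \eqref{LocShapeReg} on every patch $\omega_\elm$, while the reference quantities $\Lebnorm{\Hat\DBFct_{\Hat z}}{\Simplex_{d-1}}{2}$ and $\Lebnorm{\grd\Hat\LgrBFct_{\Hat z}}{\Simplexd}{2}$ depend only on $d$ and $\pdeg$. Hence there is a constant $C_1=C_1(d,\pdeg,\sigma_\mesh)$ with $\DcConst_\elm^2\leq C_1$ for all $\elm\in\mesh$.

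The final step is to interchange the order of summation in the triple sum and count, for each fixed $\elm'$, the multiplicity of $e(v,\elm')^2$. A triple $(\elm,z,\elm')$ contributes precisely when $\elm$ and $\elm'$ share the Lagrange node $z\in\LgrNodes{\pdeg}(\elm)\cap\elm'$, i.e.\ when $\elm\subset\omega_{\elm'}$. Since the shape coefficients of the elements in $\omega_{\elm'}$ are bounded by $\sigma_\mesh$, the cardinality of $\{\elm\in\mesh:\elm\subset\omega_{\elm'}\}$ is bounded in terms of $\sigma_\mesh$ alone, and each element carries at most $\binom{\pdeg+d}{d}$ Lagrange nodes. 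Thus each $e(v,\elm')^2$ appears at most $C_2=C_2(d,\pdeg,\sigma_\mesh)$ times in the triple sum, giving
$$\error{v-\Ip v}{\Omega}^2 \leq (1+C_1 C_2) \sum_{\elm\in\mesh} e(v,\elm)^2,$$
which is the claim with $C=\sqrt{1+C_1 C_2}$. I do not anticipate any real obstacle here; the only point requiring care is verifying that all constants depend solely on $d$, $\pdeg$, and $\sigma_\mesh$, with no hidden dependence on the cardinality of $\mesh$.
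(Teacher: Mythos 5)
Your proof is correct and follows essentially the same route as the paper: sum the local decoupling inequality of Theorem \ref{T:LDcplng} over $\elm\in\mesh$, bound $\DcConst_\elm$ uniformly by the shape-regularity lemma preceding Theorem \ref{T:elm-dec}, and count the multiplicity with which each $e(v,\elm')^2$ appears. The paper packages the multiplicity bound as $1+n_\pdeg N_\mesh$ (boundary nodes per element times maximal star size) rather than your patch-cardinality count, but this is only a cosmetic difference in the bookkeeping.
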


\begin{proof}
We sum \eqref{LDcplng} over all $\elm\in\mesh$.  On the right-hand side, we hit 
a given element $\elm'\in\mesh$ at most $1+n_\pdeg N_\mesh$ times, where 
$n_\pdeg = \#\{z\in\LgrNodes{\pdeg}(\elm)\mid z\in\partial\elm\}$ indicates the 
number of boundary Lagrange nodes and 
\[
 N_\mesh
 \defas
 \max_{z\in\LgrNodes{\pdeg}(\mesh)} \#\{\elm'\in\mesh \mid \elm'\ni z\}
\]
stands for the maximum number of elements in a star. The latter can be bounded 
in terms of the shape coefficient of $\mesh$, see the end 
of \S\ref{S:ldecoupling}, and the claim follows.
\end{proof}

Theorem \ref{T:elm-dec} covers various boundary conditions associated with an 
`$\Hil1$-setting'.  We illustrate this by discussing Dirichlet
and Neumann boundary conditions for Poisson's equation.

For Dirichlet boundary conditions, we follow the approach of
L.\ R.\ Scott and S.\ Zhang in \cite[\S5]{Scott.Zhang:90}.  Denote by
$\IpSZ:\Hil{1}(\Omega)\to\Splines^{\pdeg,0}(\mesh)$ the interpolation
operator therein and recall that the restriction $\IpSZ v_{|\partial\Omega}$
depends only on $v_{|\partial\Omega}$.  Given boundary values
$g\in\Hil{\frac12}(\partial\Omega)$, the weak solution of a Dirichlet
problem is from the trial space
\[
 X_g \defas \{ v \in \Hil1(\Omega) \mid v_{|\partial\Omega} = g \}
\]
and the finite element solution is sought in the space
\[
 \Splines_g(\mesh)
 \defas
 \{ V\in\Splines^{\pdeg,-1}(\mesh) \mid
    V\in C^0(\overline\Omega),
    V_{|\partial\Omega} = \IpSZ g \}
\]
which is not necessarily a subspace of $X_g$.  In view of \cite[Lemma 
2.1]{Sacchi.Veeser:06}, \eqref{H1seminorm} is a definite error notion on 
$X_g-\Splines_g(\mesh)$.  Since however $\IpSZ v_{|\partial\Omega} = 
v_{|\partial\Omega}$ for all $v\in\Splines^{\pdeg,0}(\mesh)$, there holds 
$\Splines_g(\mesh)\subset X_g$, i.e.\ the ensuing finite element method is 
conforming, whenever possible.  It is not difficult to show that the finite 
element solution is a near best approximation from $\Splines_g(\mesh)$.

\begin{cor}[Dirichlet boundary values]
\label{C:Dirichlet}
For any $v\in X_g$, there holds
\[
  E\big( v,\Splines_g(\mesh) \big)
 \leq
 C  \left[
  \sum_{\elm\in\mesh} e(v,\elm)^2
 \right]^{\frac12},
\]
where $C$ is the constant from Theorem \ref{T:LDcplng}.  Consequently, the 
decoupling coefficient of $\Splines_g(\mesh)$ is bounded in terms of $d$, 
$\pdeg$ and $\sigma_\mesh$.
\end{cor}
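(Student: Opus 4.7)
The plan is to exhibit an explicit interpolant $W\in\Splines_g(\mesh)$ whose $\Hil1$-seminorm error against $v$ is already controlled by the right-hand side of the claim, and to invoke Theorem \ref{T:elm-dec} directly.

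First, I would construct $W$ via the interpolant $\Ip$ from \eqref{DefIp}. The only freedom in the definition is the choice of face $\Face_z$ for each constrained node $z\in\Cnstr$. For a boundary node $z\in\LgrNodes{\pdeg}(\mesh)\cap\partial\Omega$ I would choose $\Face_z\subset\partial\Omega$ so that it agrees with the face selected by the Scott–Zhang construction $\IpSZ$ used to define $\Splines_g(\mesh)$; for interior constrained nodes any face containing $z$ will serve. Set $W\defas\Ip v$. This is legitimate because the $(d-1)$-face-connectedness required in Theorem \ref{T:elm-dec} continues to hold under this specific face selection.

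Second, I would check that $W\in\Splines_g(\mesh)$. Continuity over $\overline\Omega$ is automatic from the Lagrange representation. For the boundary trace, note that $W{}_{|\partial\Omega}$ depends only on the nodal values $\Ip_z v$ for $z\in\LgrNodes{\pdeg}(\mesh)\cap\partial\Omega$, since basis functions at interior nodes vanish on $\partial\Omega$. For such $z$ one has $\Ip_z v=\NodalVar_{z;\Face_z}(v)=\int_{\Face_z}v\,\DBFct_z^{\Face_z}$, which depends only on $v_{|\Face_z}=g_{|\Face_z}$ because $\Face_z\subset\partial\Omega$. By the alignment of face choices with the Scott–Zhang operator, this value coincides with $(\IpSZ g)(z)$. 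Hence the Lagrange expansions of $W$ and $\IpSZ g$ agree at every boundary node, so $W_{|\partial\Omega}=\IpSZ g$ and therefore $W\in\Splines_g(\mesh)$.

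Third, I would apply Theorem \ref{T:elm-dec}, which yields $\error{v-W}{\Omega}\leq C\bigl[\sum_{\elm\in\mesh}e(v,\elm)^2\bigr]^{1/2}$ with $C=C(d,\pdeg,\sigma_\mesh)$. Since $W\in\Splines_g(\mesh)$, this bounds $E(v,\Splines_g(\mesh))$ from above by the same quantity, and combined with the trivial lower bound \eqref{local<=global} (applied to the broken space) it also pins down the decoupling coefficient of $\Splines_g(\mesh)$ in terms of $d$, $\pdeg$ and $\sigma_\mesh$.

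The only delicate point is the compatibility in Step~2: the definition of $\Splines_g(\mesh)$ uses the Scott–Zhang trace $\IpSZ g$, not the trace of an arbitrary near-best approximation, so the face-selection rule for $\Ip$ at boundary nodes must be aligned with that of $\IpSZ$. Once this alignment is fixed, membership $W\in\Splines_g(\mesh)$ follows and the error estimate is an immediate instance of Theorem \ref{T:elm-dec}; no new analytical ingredients are required beyond what has already been established.
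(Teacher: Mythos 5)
Your proof is correct and follows essentially the same route as the paper: align the face choices $\Face_z$ at boundary nodes with those of the Scott--Zhang operator $\IpSZ$, observe that this forces $(\Ip v)_{|\partial\Omega}=\IpSZ g$ so that $\Ip v\in\Splines_g(\mesh)$, i.e.\ \eqref{Ip;BdryVals} holds for $X=X_g$ and $\Splines(\mesh)=\Splines_g(\mesh)$, and then invoke Theorem~\ref{T:elm-dec}. You spell out the boundary-trace verification in more detail than the paper does, but the argument is the same; the appeal to \eqref{local<=global} at the end is not needed to bound the decoupling coefficient, since the upper bound alone suffices by its definition.
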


\begin{proof}
As for the case corresponding to the introduction, there holds
\[
 \Cnstr = \LgrNodes{\pdeg}(\mesh) \cap \Skeleton
\quad\text{with}\quad
 \Skeleton
 =
 \bigcup_{\Face\in\Faces(\mesh)} \Face
\]
and we require that $\Face_z\subset\partial\Omega$ whenever 
$z\in\Cnstr\cap\partial\Omega$.  Moreover, we require that the choices of 
$\Face_z$ for all $z\in\LgrNodes{\pdeg}(\mesh)\cap\partial\Omega$ in definitions 
of $\IpSZ$ and $\Ip$ coincide.  Consequently,
\[
 \Ip v_{|\partial\Omega}
  =
 \IpSZ v_{|\partial\Omega}
\]
and \eqref{Ip;BdryVals} holds with $X=X_g$ and 
$\Splines(\mesh)=\Splines_g(\mesh)$. Theorem \ref{T:elm-dec} thus yields the
claim.
\end{proof}
For homogeneous Dirichlet boundary values $g=0$, Corollary \ref{C:Dirichlet}
implies the non-obivous part of \eqref{equivalence}. A further immediate
consequence is that `near best in $\Splines_g(\mesh)$' entails `near best in 
$\Splines^{\pdeg,0}(\mesh)$'.  In particular, the aforementioned finite element 
solution is thus near best in $\Splines^{\pdeg,0}(\mesh)$.

For Neumann boundary conditions, the trial space is
\[
 \Tilde X \defas \Hil1(\Omega)/\R,
\]
which can be approximated by
\[
 \Tilde\Splines(\mesh)
 \defas
 \Splines^{\pdeg,0}(\mesh)/\R.
\]
Again, \eqref{H1seminorm} is a definite error notion and the finite element
solution of this space is near best.
 
\begin{cor}[Neumann boundary values]
\label{C:Neumann}
For any $v\in \Tilde X$, there holds
\[
  E\big( v, \Tilde\Splines(\mesh) \big)
 \leq
 C  \left[
  \sum_{\elm\in\mesh} e(v,\elm)^2
 \right]^{\frac12},
\]
where $C$ is the constant from Theorem \ref{T:LDcplng}.  Consequently,
the decoupling coefficient of $\Tilde\Splines(\mesh)$ is bounded
in terms of $d$, $\pdeg$ and $\sigma_\mesh$.
\end{cor}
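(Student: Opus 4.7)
The plan is to reduce Corollary \ref{C:Neumann} directly to Theorem \ref{T:elm-dec} by checking that the interpolation operator $\Ip$ descends to the quotient by constants. Given an equivalence class $[v]\in\Tilde X = \Hil1(\Omega)/\R$, I would pick an arbitrary representative $v\in\Hil1(\Omega)$ and apply $\Ip v\in\Splines^{\pdeg,0}(\mesh)$ as constructed in \eqref{DefIp}. In the Neumann setting no boundary condition is imposed, so the set of constrained nodes is simply $\Cnstr = \LgrNodes{\pdeg}(\mesh)\cap\Skeleton$ and the face $\Face_z$ in \eqref{DefIp:constrained} is chosen without any additional requirement near $\partial\Omega$. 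In particular, \eqref{Ip;BdryVals} holds trivially with $X=\Tilde X$ and $\Splines(\mesh)=\Tilde\Splines(\mesh)$ since $\Ip v\in\Splines^{\pdeg,0}(\mesh)$ always yields $[\Ip v]\in\Tilde\Splines(\mesh)$.

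The crucial observation is that both sides of the estimate in Theorem \ref{T:elm-dec} are insensitive to the choice of representative: by definition \eqref{H1seminorm} the error notion $\error{\cdot}{\Omega}$ depends only on the gradient, and each local best error $e(v,\elm)$ likewise depends only on $(\grd v)_{|\elm}$ because the minimization in its definition is taken over the full space $\Poly{\pdeg}{\elm}$ which contains the constants. Hence both quantities are well-defined on $\Tilde X$, and definiteness of $\error{\cdot}{\Omega}$ on $\Tilde X - \Tilde\Splines(\mesh)$ is immediate since a function with vanishing gradient is constant and therefore zero in both quotients.

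Assembling these pieces, I would conclude
\[
 E\big([v], \Tilde\Splines(\mesh)\big)
 \leq
 \error{[v]-[\Ip v]}{\Omega}
 =
 \error{v-\Ip v}{\Omega}
 \leq
 C\left[\sum_{\elm\in\mesh} e(v,\elm)^2\right]^{\frac12}
\]
by direct application of Theorem \ref{T:elm-dec}, with the same constant $C$ depending on $d$, $\pdeg$ and $\sigma_\mesh$. The final clause about the decoupling coefficient then follows by taking the infimum over $v\in[v]$ on the right-hand side (which has no effect since the right-hand side is already independent of the representative).

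I do not expect any serious obstacle here: the only subtlety is verifying that all constructions respect the quotient by constants, and this reduces to noting that $\grd$ annihilates constants and that $\Poly{\pdeg}{\elm}\supset\R$. Unlike the Dirichlet case in Corollary \ref{C:Dirichlet}, no compatibility of boundary face choices is needed, so the argument is actually simpler than its Dirichlet counterpart.
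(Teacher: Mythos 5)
Your argument is correct and takes essentially the same route as the paper: both reduce the Neumann case to Theorem~\ref{T:elm-dec} by observing that nothing obstructs the interpolant from landing in the trial space and that the error quantities are insensitive to constants. The paper phrases this by identifying $H^1(\Omega)/\R$ with the concrete section $\{v\in H^1(\Omega)\mid\int_\Omega v=0\}$ so that condition \eqref{Ip;BdryVals} can be read off directly, whereas you argue intrinsically in the quotient, noting that $\error{\cdot}{\Omega}$, $e(\cdot,\elm)$, and the class $[\Ip v]$ are all well-defined modulo constants because $\grd$ annihilates $\R$ and $\Poly{\pdeg}{\elm}\supset\R$; the two formulations are interchangeable and equally short.

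One cosmetic remark: the corollary's statement attributes the constant to Theorem~\ref{T:LDcplng}, while you (like the paper's own proof) in effect take it from Theorem~\ref{T:elm-dec}. Since the global constant in Theorem~\ref{T:elm-dec} is assembled from the local constants of Theorem~\ref{T:LDcplng}, this is a harmless labelling discrepancy already present in the paper, not a gap in your argument.
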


\begin{proof}
 Identifying
 $\Hil1(\Omega)/\R$ and $\{v\in\Hil1(\Omega) \mid \int_\Omega v = 0 \}$,
 the implication \eqref{Ip;BdryVals} holds with $X=\Tilde X$ and Theorem
 \eqref{T:elm-dec} again yields the claim.
\end{proof}
Similarly, one can consider Robin boundary conditions or mixed ones and obtain
corresponding statements.
 
\begin{rem}[Construction of interpolation operator]
\label{R:Ip}
The definition of the nodal values $\Ip_zv$ for constrained nodes
$z\in\LgrNodes{\pdeg}(\mesh)\cap\Cnstr$ is the critical part.  For constrained
nodes on the boundary, we follow the approach of \cite{Scott.Zhang:90}.
The role of \eqref{nbnval} in proving Theorem \ref{T:elm-dec} reveals that this
is a near best choice with respect to the involved local errors and can be
adopted also for the other constrained nodes.  Inequality \eqref{nbnval} shows
also that the particular admissible choice of $\Face_z$ does not matter.
Moreover, its proof reveals that, for interior constrained nodes, also
$P_\elm(z)$ where the element $\elm$ contains $z$, or some average of these 
values may be used.  Interpolation operators of this type may be viewed as 
a composition of taking a best approximation in $S^{\ell,-1}(\mesh)$ and 
a so-called enriching operator.   The latter have been used to connect 
non-conforming finite element methods to conforming ones in various contexts; 
see, e.g., S.\ Brenner \cite{Brenner:96, Brenner:99}, O.\ S.\ Karakashian and 
F.\ Pascal \cite{Karakashian.Pascal:03}, T.\ Gudi \cite{Gudi:10} and A.\ Bonito 
and R.\ H.\ Nochetto \cite{Bonito.Nochetto:10}.
\end{rem}

\subsection{Local gradient conformity}
\label{S:grd-conformity}
%
The local best errors $e(v,\elm)$, $\elm\in\mesh$, are related to approximation 
problems of the following type. Approximate a vector function, which is the 
gradient of a scalar function, with gradients of polynomial functions.  The 
components of the approximants are coupled whenever $\pdeg\geq2$: indeed, the 
value of higher order partial derivatives then does not depend on their order of 
application.  Hence the following question arises: Does this coupling lead to 
some downgrading with respect to approximants, the components of which are 
independent polynomial functions?

A similar question arises for $E(v,\mesh)$.  However, in view of Theorem 
\ref{T:elm-dec}, it suffices for both questions to compare the local best 
errors $e(v,\elm)$, $\elm\in\mesh$, with the following ones:
\begin{align*}
 \ve(v,\elm)
 &\defas
 \inf_{Q\in\Poly{\pdeg-1}{\elm}^d} \verror{\grd v-Q}{\elm}
 =
 \left( \sum_{i=1}^d
  \inf_{R\in\Poly{\pdeg-1}{\elm}} \verror{\partial_i v - R}{\elm}^2
 \right)^{\frac12},
\end{align*}
with
\[
 \verror{f}{\Omega}
 \defas
 \| |f| \|_{0,2;\Omega}
 =
 \left(
  \int_\Omega |f|^2
 \right)^{\frac12}
\]
for $f\in\Leb2(\Omega)^d$.  Also here the inequality
\[
 \ve(v,\elm) \leq e(v,\elm)
\]
is straight-forward, while the opposite one is more involved and its proof 
relies on the construction of a suitable (quasi-)interpolant.  We shall use the 
averaged Taylor polynomial of \cite{Dupont.Scott:80} by T.~Dupont and 
L.~R.~Scott, which is a variant of the one of S.\ L. Sobolev.  In order to 
avoid a dependence on the element shape, we follow an idea of S.~Dekel and 
D.~Leviatan in \cite{Dekel.Leviatan:04} and average in a reference 
configuration. 
\begin{thm}[Decoupling of partial derivatives]
\label{T:pder-dec}
There is a constant $C$ depending only on $d$, $\pdeg\in\N$ such that, for any 
element $\elm\in\mesh$ and any function $v\in\Hil1(\elm)$, there holds
\[
  e(v,\elm)
  \leq
  C \ve(v,\elm).
\]
\end{thm}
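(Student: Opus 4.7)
The plan is to prove the inequality by constructing an explicit approximant $P \in \Poly{\pdeg}{\elm}$ via an averaged Taylor operator performed on a reference configuration, and to show that $\verror{\grd(v - P)}{\elm}$ is controlled by $\ve(v,\elm)$. The construction combines the averaged Taylor polynomial of Dupont and Scott with the Dekel--Leviatan device of averaging on the reference simplex, which is what allows one to keep the shape dependence under control.

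First, I would fix once and for all on the reference simplex $\Simplexd$ a ball $B$ compactly contained in $\Simplexd$ and a cutoff $\phi \in C^\infty_c(B)$ with $\int\phi = 1$. For $k \in \{\pdeg-1,\pdeg\}$ and $w \in L^1(\Simplexd)$, the averaged Taylor operator is
\[
 Q^k w(x) := \int_B \phi(y)\, T^k(w;y)(x) \, dy,
\]
where $T^k(w;y)(x) = \sum_{|\alpha|\le k} \tfrac{1}{\alpha!} D^\alpha w(y)\,(x-y)^\alpha$ is the Taylor polynomial of order $k$ of $w$ at $y$. After $|\alpha|$ integrations by parts in $y$ this expression extends to every $w \in L^1(\Simplexd)$, so that $Q^k$ is a bounded linear projection from $L^1(\Simplexd)$ onto $\Poly{k}{\Simplexd}$; in particular it is $L^2$-stable with constant depending only on $d$ and $k$.

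For the given element $\elm$, let $F\colon \Simplexd \to \elm$ be the affine bijection with linear part $A$, set $\hat v := v \circ F$, and define the candidate approximant $P := (Q^\pdeg \hat v) \circ F^{-1} \in \Poly{\pdeg}{\elm}$. The key ingredient is the commutation identity $\grd(Q^\pdeg w) = Q^{\pdeg-1}(\grd w)$ on $\Simplexd$, which follows by differentiating under the $y$-integral and using $\grd_x T^\pdeg(w;y)(x) = T^{\pdeg-1}(\grd w; y)(x)$. Since $Q^{\pdeg-1}$ reproduces every element of $\Poly{\pdeg-1}{\Simplexd}^d$, for any $S \in \Poly{\pdeg-1}{\Simplexd}^d$ one has $\grd\hat v - \grd(Q^\pdeg \hat v) = (I - Q^{\pdeg-1})(\grd\hat v - S)$, and hence the $L^2$-stability of $Q^{\pdeg-1}$ yields the reference estimate
\[
 \verror{\grd\hat v - \grd(Q^\pdeg \hat v)}{\Simplexd}
 \le C(d,\pdeg) \inf_{S \in \Poly{\pdeg-1}{\Simplexd}^d} \verror{\grd\hat v - S}{\Simplexd}.
\]

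The final step is to transfer this reference estimate back to $\elm$ through the change of variables $F$. Under $F$ the gradient pulls back by $(\grd v)\circ F = A^{-T}\grd_{x'}\hat v$, so both $e(v,\elm)^2$ and $\ve(v,\elm)^2$ become $|\det A|$ times an integral over $\Simplexd$ of $|A^{-T}(\cdot)|^2$ applied to a suitable vector field; the substitution $\tilde Q \mapsto A^{-T}\tilde Q$ places both infima over the same space $\Poly{\pdeg-1}{\Simplexd}^d$. I expect the main obstacle to be extracting from this bookkeeping a constant that depends only on $d$ and $\pdeg$ and not on the shape of $\elm$: a crude use of the operator-norm bound $|A^{-T}w| \le \|A^{-1}\|\,|w|$ would introduce a spurious factor equal to the condition number $\kappa(A) = \|A\|\|A^{-1}\|$. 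To remove this factor I would additionally exploit that $\grd\hat v$ is curl-free on $\Simplexd$, so that for the best approximant $S^*$ of $\grd\hat v$ in $\Poly{\pdeg-1}{\Simplexd}^d$ one has $\mathrm{curl}(S^*) = -\mathrm{curl}(\grd\hat v - S^*)$; since $\mathrm{curl}(S^*)$ lives in the finite-dimensional space $\Poly{\pdeg-2}{\Simplexd}$, on which the $L^2$- and $H^{-1}$-norms on $\Simplexd$ are equivalent with constants depending only on $d$ and $\pdeg$, the distance from $S^*$ to the subspace $\grd\Poly{\pdeg}{\Simplexd} \subset \Poly{\pdeg-1}{\Simplexd}^d$ is controlled by the residual, so that the difference between $e(v,\elm)^2$ and $\ve(v,\elm)^2$ is bounded by $\ve(v,\elm)^2$ with a shape-independent constant, yielding $e(v,\elm) \le C \ve(v,\elm)$.
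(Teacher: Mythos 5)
Your choice of tools is right---the averaged Taylor polynomial of Dupont--Scott, averaging on the reference configuration as in Dekel--Leviatan, the $L^2$-stability and projection property, and the reference commutation $\grd(Q^\pdeg w)=Q^{\pdeg-1}(\grd w)$---and you correctly identify that the real difficulty is the shape-dependent factor $\kappa(A)$ hiding in the transfer from $\Simplexd$ back to $\elm$. But the proposed curl-based fix does not close that gap. Under the affine pull-back, $\error{\cdot}{\elm}$ for gradients becomes the $A^{-T}$-weighted $L^2(\Simplexd)$-norm, and \emph{both} $e(v,\elm)^2$ and $\ve(v,\elm)^2$ become $|\det A|$ times infima of $\verror{A^{-T}(\grd\hat v - \cdot)}{\Simplexd}^2$ over $\grd\Poly{\pdeg}{\Simplexd}$ and $\Poly{\pdeg-1}{\Simplexd}^d$, respectively. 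Your curl argument bounds the distance from the best $S^*$ to $\grd\Poly{\pdeg}{\Simplexd}$ in the \emph{unweighted} $L^2(\Simplexd)$-norm, because the $L^2$/$H^{-1}$ equivalence on $\Poly{\pdeg-2}{\Simplexd}$ is an unweighted, fixed-domain statement. Passing between the unweighted and the $A^{-T}$-weighted norms reintroduces a factor $\|A\|\,\|A^{-1}\|=\kappa(A)$, which is exactly what you were trying to eliminate. So the argument as written still has shape-dependent constants.

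The paper sidesteps this entirely by invoking the Dekel--Leviatan identity \eqref{Diff-avgTaylor} at the level of the \emph{physical} partial derivatives: with $P:=[I_\pdeg(v\circ A)]\circ A^{-1}$ one has $\partial_i P = (I_{\pdeg-1}[(\partial_i v)\circ A])\circ A^{-1}$. Thus each $\partial_i v-\partial_i P$, viewed as a \emph{scalar} function, pulls back to $(\partial_i v)\circ A - I_{\pdeg-1}[(\partial_i v)\circ A]$ on $\Simplexd$ under the ordinary scalar change of variables, which produces only the harmless Jacobian factor $|\det A|=\vol{\elm}/\vol{\Simplexd}$ and no $A^{-T}$ at all. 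The near-best estimate for $I_{\pdeg-1}$ on $\Simplexd$ (via Xu--Zikatanov) then gives, component by component, a bound by $\inf_S\verror{(\partial_i v)\circ A - S}{\Simplexd}$, which changes variables back to $\inf_R\verror{\partial_i v - R}{\elm}$. In short: where you approximate the \emph{vector field} $\grd\hat v$ and transport it as a vector (hence $A^{-T}$), the paper approximates each scalar $(\partial_i v)\circ A$ separately and transports it as a scalar; the Dekel--Leviatan commutation is precisely what guarantees these scalar approximations reassemble into the gradient of a single polynomial $P$ on $\elm$. That is the idea your proposal is missing.
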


\begin{proof}
Given $\pdeg\in\N_0$, denote by $I_\pdeg$ the operator corresponding to the 
averaged Taylor polynomial of order $\pdeg+1$ (and degree $\leq\pdeg)$ over the 
largest inscribed ball in $\Simplexd$.  Remarkably, the averaged Taylor 
polynomial commutes with differentation in that $\partial_i (I_\pdeg 
w)=I_{\pdeg-1} (\partial_i w)$ for all $w\in\Hil1(\Simplexd)$ and 
$i\in\{1,\dots,d\}$; see, e.g., \cite[(4.1.17)]{Brenner.Scott:08}.  Corollary 
3.4 in \cite{Dekel.Leviatan:04} generalizes this to
\begin{equation}\label{Diff-avgTaylor}
 \partial_i \Big( \big[ I_\pdeg (v\circ A) \big] \circ A^{-1} \Big)
 =
 \Big( I_{\pdeg-1} \big[ (\partial_i v) \circ A \big] \Big) \circ A^{-1},
\end{equation}
where $A:\Simplexd\to\elm$ is an affine bijection, $v\in\Hil1(\elm)$ and 
$\pdeg\geq1$.  Moreover, $I_\pdeg$ is a $\Leb2$-stable projection on 
$\Poly{\pdeg}{\Simplexd}$: for any $w\in\Leb2(\Simplexd)$, there hold
\begin{gather*}
 w\in\Poly{\pdeg}{\Simplexd} \implies I_\pdeg w = w,
\\
 \verror{I_\pdeg w}{\Simplexd} \leq C_{d,\pdeg} \verror{w}{\Simplexd}.
\end{gather*}
see, e.g., \cite[(4.1.15), (4.2.8)]{Brenner.Scott:08}.  Lemma~5 of J.~Xu and 
L.~Zikatanov \cite{Xu.Zikatanov:03} therefore implies that $I_\pdeg$ is near 
best with the constant $C_{d,\pdeg}$:
\begin{equation}\label{Ipdeg:nbest}
 \verror{w-I_\pdeg w}{\Simplexd}
 \leq
 C_{d,\pdeg} \inf_{S\in\Poly{\pdeg}{\Simplexd}} \verror{w-S}{\Simplexd}.
\end{equation}

Motivated by \eqref{Diff-avgTaylor}, we choose $P = \big[ I_\pdeg (v\circ A) 
\big] \circ A^{-1} \in\Poly{\pdeg}{\elm}$ and, also using the transformation 
rule and \eqref{Ipdeg:nbest} with $\pdeg-1$ in place of $\pdeg$, 
we obtain
\begin{align*}
 e(v,\elm)^2
 &\leq
 \verror{\grd (v - P)}{\elm}^2
 =
 \sum_{i=1}^d \verror{\partial_i v - \partial_i P}{\elm}^2
\\
 &=
 \sum_{i=1}^d 
  \verror{\partial_i v - \Big(
    I_{\pdeg-1} \big[ (\partial_i v) \circ A \big] 
    \Big) \circ A^{-1}}{\elm}^2.
\\
 &=
 \frac{\vol{\elm}}{\vol{\Simplexd}} \sum_{i=1}^d 
  \verror{ (\partial_i v)\circ A - I_{\pdeg-1} \big[ (\partial_i v) \circ A 
   \big] }{\Simplexd}^2
\\
 &\leq
 C_{d,\pdeg-1}^2 \frac{\vol{\elm}}{\vol{\Simplexd}} \sum_{i=1}^d
  \inf_{S\in\Poly{\pdeg-1}{\Simplexd}}
   \verror{\partial_i v\circ A - S}{\Simplexd}^2
\\
 &\leq
 C_{d,\pdeg-1}^2  \sum_{i=1}^d
  \inf_{R\in\Poly{\pdeg-1}{\elm}}
   \verror{\partial_i v - R}{\elm}^2
 =
 C_{d,\pdeg-1}^2 \ve(v,\elm)^2
\end{align*}
Consequently, the claimed inequality holds with $C=C_{d,\pdeg-1}$.
\end{proof}

The combination of Theorems \ref{T:elm-dec} and \ref{T:pder-dec} yields the 
following statement.

\begin{cor}[Decoupling of elements and partial derivatives]
\label{C:elm-pd-dec}
There is a constant $C$ such that for any $v\in X$ there holds
\[
 E\big( v,\Splines(\mesh) \big)
 \leq
 C \left[
  \sum_{\elm\in\mesh} \ve(v,\elm)^2
 \right]^{\frac12},
\]
with $C\leq C_{d,\pdeg-1} \DcConst(\mesh)$, which can be bounded in terms of 
$d$, $\pdeg$ and $\sigma_\mesh$.
\end{cor}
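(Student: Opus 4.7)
The plan is to chain together the two main decoupling results already established: Theorem \ref{T:elm-dec} (or rather its consequences for the various choices of $X$ and $\Splines(\mesh)$, captured by the finiteness of $\DcConst(\mesh)$) and Theorem \ref{T:pder-dec}, which compares the scalar local best errors with the vectorial ones. Since both inequalities are already in hand, the argument is essentially a one-line composition, and there is no serious obstacle beyond bookkeeping of constants.

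First, I invoke the global decoupling result. For every $v \in X$ one has
\[
 E\big(v,\Splines(\mesh)\big)
 \leq
 \DcConst(\mesh) \left[ \sum_{\elm\in\mesh} e(v,\elm)^2 \right]^{\frac12},
\]
which is the definition of the decoupling coefficient \eqref{global<=Clocal}, finite thanks to Theorem \ref{T:elm-dec} (and its variants in Corollaries \ref{C:Dirichlet} and \ref{C:Neumann}) under the stated assumptions on the mesh. Next, I apply Theorem \ref{T:pder-dec} elementwise, yielding $e(v,\elm) \leq C_{d,\pdeg-1} \, \ve(v,\elm)$ for each $\elm\in\mesh$. Squaring and summing over $\mesh$ gives
\[
 \left[ \sum_{\elm\in\mesh} e(v,\elm)^2 \right]^{\frac12}
 \leq
 C_{d,\pdeg-1} \left[ \sum_{\elm\in\mesh} \ve(v,\elm)^2 \right]^{\frac12}.
\]

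Combining the two bounds gives the claimed inequality with $C \leq C_{d,\pdeg-1} \, \DcConst(\mesh)$. The dependence of $C$ on $d$, $\pdeg$ and $\sigma_\mesh$ follows immediately: the factor $C_{d,\pdeg-1}$ depends only on $d$ and $\pdeg$ by Theorem \ref{T:pder-dec}, while $\DcConst(\mesh)$ is controlled by $d$, $\pdeg$ and $\sigma_\mesh$ according to Theorem \ref{T:elm-dec}. Since every step is a direct quotation of an already-proved statement, there is no genuinely hard part—only the trivial verification that the two estimates can be multiplied without any assumption mismatch, which holds because Theorem \ref{T:pder-dec} imposes no conditions on the mesh beyond those used for Theorem \ref{T:elm-dec}.
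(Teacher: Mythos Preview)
Your argument is correct and matches the paper's approach exactly: the corollary is stated there as an immediate consequence of combining Theorems~\ref{T:elm-dec} and~\ref{T:pder-dec}, with no further proof given. Your bookkeeping of the constant $C \leq C_{d,\pdeg-1}\,\DcConst(\mesh)$ is precisely what the statement records.
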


It is worth noticing that the right-hand side in Corollary \ref{C:elm-pd-dec} 
involves best errors of approximation problems that may be considered the most 
simplest ones involving polynomials and measuring the error in some 
$\Leb2$-sense.

\section{Two applications}
\label{S:appl}
%
%
The main novelty of the preceding section lies in the type of statements that 
are proven.  The goal of this section is to advocate its usefulness by 
showing that it allows for simplifications or improvements in theory and 
algorithms.  Doing so, we focus on applications of the decoupling of elements 
and, in order to cover various boundary conditions, we adopt the setting of 
\S\ref{S:best-errors}.

\subsection{Convergence and error estimates}
\label{S:Conv-ErrEst}
%
We start by reviewing some approximation results that play an important role in 
the a~priori error analysis of finite element methods.

A minimum requirement for a numerical method for a boundary value problem is 
that the approximate solution converges to the exact one as the meshsize tends 
to $0$.  In case of a finite element method, a necessary (and also sufficient, 
if for example the C\'ea Lemma holds) condition for this is that the best error 
of the corresponding finite element space tends to $0$.  It is instructive to 
prove this well-known fact for continuous piecewise polynomial functions with 
the help of Theorem \ref{T:elm-dec}.

\begin{thm}[Convergence]
There holds
 \[
  E(v,\mesh) \to 0
  \quad\text{as}\quad
  h \defas \max_{\elm\in\mesh} h_\elm \to 0
 \]
within a shape-regular family of meshes with $(d-1)$-face connected stars.
 \end{thm}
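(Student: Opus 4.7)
The plan is to reduce the problem to a standard $\Leb2$-density statement via Theorem~\ref{T:elm-dec}.  Since the family is shape-regular with $(d-1)$-face-connected stars, the constant $C=C(d,\pdeg,\sigma_\mesh)$ produced by Theorem~\ref{T:elm-dec} can be taken uniform across the family.  Writing $\Ip$ for the interpolant constructed in \S\ref{S:ConformityImpact} (which, thanks to \eqref{Ip;BdryVals}, maps $X$ into $\Splines(\mesh)$), this yields
\[
 E(v,\mesh)^2
 \leq
 \error{v-\Ip v}{\Omega}^2
 \leq
 C \sum_{\elm\in\mesh} e(v,\elm)^2,
\]
so that it suffices to show $\sum_{\elm\in\mesh} e(v,\elm)^2 \to 0$ as $h\to 0$.

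Because $\pdeg\geq1$, every affine polynomial is admissible on each element, and its gradient is an arbitrary constant vector in $\Rd$.  Choosing the best such constant, namely the elementwise mean $c_\elm\defas\vol\elm^{-1}\int_\elm\grd v$, I obtain
\[
 e(v,\elm)^2
 \leq
 \inf_{c\in\Rd}\verror{\grd v - c}{\elm}^2
 =
 \verror{\grd v - c_\elm}{\elm}^2,
\]
and summation over $\elm\in\mesh$ gives
\[
 \sum_{\elm\in\mesh} e(v,\elm)^2
 \leq
 \verror{\grd v - \pi_\mesh\grd v}{\Omega}^2,
\]
where $\pi_\mesh$ denotes the elementwise $\Leb2$-projection onto piecewise constant vector fields.

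The remaining step is to show that $\verror{g-\pi_\mesh g}{\Omega}\to0$ as $h\to 0$ for every $g\in\Leb2(\Omega)^d$, and then to apply this with $g=\grd v\in\Leb2(\Omega)^d$.  This is a classical density fact: given $\tau>0$, approximate $g$ within $\tau$ in $\Leb2(\Omega)^d$ by a function $\tilde g$ continuous on $\overline\Omega$; uniform continuity of $\tilde g$ on the compact set $\overline\Omega$ makes $\verror{\tilde g-\pi_\mesh\tilde g}{\Omega}$ arbitrarily small once $h$ is small, and the $\Leb2$-stability $\verror{\pi_\mesh w}{\Omega}\leq\verror{w}{\Omega}$ absorbs the $g-\tilde g$ error.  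The only delicate point is keeping $C$ uniform along the family, which is built directly into the shape-regularity and face-connectedness hypotheses; beyond this no serious obstacle arises, the whole argument being the reduction of gradient approximation by $\Splines(\mesh)$ to the routine task of approximating a generic $\Leb2$-function by piecewise constants on a refining mesh.
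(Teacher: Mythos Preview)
Your proof is correct and follows essentially the same route as the paper: apply Theorem~\ref{T:elm-dec} (with the shape-regularity hypothesis ensuring a uniform constant), use $\pdeg\geq1$ to bound each $e(v,\elm)$ by the $\Leb2$-distance of $\grd v$ to its elementwise mean, and conclude by the standard density argument (continuous approximation, uniform continuity on $\overline\Omega$, $\Leb2$-stability of the piecewise-constant projection). The paper's proof is identical in substance, differing only in notation (it writes $\overline{\grd v}$ where you write $\pi_\mesh\grd v$).
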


\begin{proof}
Given a vector function $f\in\Leb2(\Omega)^d$, let $\overline f\in 
\Leb\infty(\Omega)^d$ be the piecewise constant function given by
\[
 \forall\elm\in\mesh
 \quad
 \overline f_{|\elm} = \frac1{\vol{\elm}} \int_\elm f.
\]
Theorem \ref{T:elm-dec} and $\pdeg\geq1$ imply
\begin{equation}\label{mv-bd}\begin{aligned}
 E(v,\mesh) 
 &\leq
 C \left( \sum_{\elm\in\mesh} e(v,\elm)^2 \right)^{\frac12}
 \leq
 C \left(
  \sum_{\elm\in\mesh} \inf_{P\in\Poly1\elm} \error{v-P}{\elm}^2
 \right)^{\frac12}
\\
 &=
 C \left(
  \sum_{\elm\in\mesh} \inf_{c\in\R} \verror{\grd v - c}{\elm}^2
 \right)^{\frac12}
 =
 C \verror{\grd v - \overline{\grd v}}{\Omega},
\end{aligned}\end{equation}
which allows to conclude with a standard argument.  Let $\eps>0$ be arbitrary. 
Since $C^0(\overline\Omega)$ is dense in $\Leb2(\Omega)$, there exists $g\in 
C^0(\overline\Omega)^d$ such that $\verror{\grd v - g}{\Omega}\leq\eps/3$. 
Thanks to $\verror{\overline f}{\Omega}\leq\verror{f}{\Omega}$, we derive
\begin{multline*}
 \verror{\grd v - \overline{\grd v}}{\Omega}
 \leq
 \verror{\grd v - g}{\Omega}
  + \verror{g - \overline g}{\Omega}
  + \verror{\overline{g-\grd v}}{\Omega}
 \leq
 (2\eps)/3 + \verror{g - \overline g}{\Omega}
\end{multline*}
and the last term can be made smaller $\eps/3$ for sufficiently small $h$, 
because $g$ is uniform continuous in view of the compactness of 
$\overline\Omega$.
\end{proof}

Notice that the first equality in \eqref{mv-bd} corresponds to a special case 
of Theorem \ref{T:pder-dec} and its combination with Theorem \ref{T:elm-dec} 
simplifies the following density argument in that it does not involve 
derivatives.

\medskip Usually, the validity of a finite element method is theoretically 
investigated by deriving a~priori error estimates, quantifying the convergence 
speed in terms of powers of $h$.  Accordingly, such estimates for the best 
error of the corresponding finite element space are of interest.  These are 
usually obtained by directly bounding the error of some interpolation operator 
in terms of higher order Sobolev seminorms.  Here we use Theorem 
\ref{T:elm-dec} and
\begin{equation*}
 \Sobseminorm{v}{s}{2}{\elm}
 \defas
 \begin{cases}
  \left(
   \sum_{|\alpha|=s} \Lebnorm{\partial^\alpha v}{2}{\elm}^2
  \right)^{\frac12}
  & \text{if }s\in\N
\\
  (1-\theta)\left(
    \sum_{|\alpha|=\lfloor s\rfloor} \displaystyle
     \int_\elm\int_\elm
     \frac{|\partial^\alpha v(x) - \partial^\alpha v(y)|^2}{|x-y|^{2\theta+d}}
    \right)^{\frac12}
  & \text{otherwise,}
 \end{cases} 
\end{equation*}
where $s>0$ indicates the smoothness, $\theta \defas s - \lfloor s \rfloor$ 
its fractional part and the sums are over all multi-indices of order $\lfloor s 
\rfloor$; the factor $(1-\theta)$ is motivated by J.\ Bourgain et al.\ 
\cite{Bourgain.Brezis.Mironescu:01}.

\begin{thm}[Error estimates]
\label{T:ErrEst}
Let $v\in X$ a target function, $\mesh$ be a mesh with $(d-1)$-face connected 
stars and $1\leq s\leq\pdeg+1$.  If $v_{|\elm}\in\Hil{s}(\elm)$ for all 
$\elm\in\mesh$, then
\[
 E(v,\mesh)
 \leq
 C \left(
   \sum_{\elm\in\mesh} h_\elm^{2(s-1)} \Sobseminorm{v}{s}{2}{\elm}^2
 \right)^{\frac12},
\]
where $C$ can be bounded in terms of $d$, $\pdeg$, $\sigma_\mesh$ and $s$. 
If $s\in\N$, then
\[
 C \leq \frac{s!}{(\lceil\frac{s}{d}\rceil!)^d} C_P^{s-1}\delta(\mesh)
\]
where $C_P$ is the optimal Poincar\'e constant for $d$-simplices in $\Rd$.
\end{thm}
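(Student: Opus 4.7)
The plan is to combine Theorem \ref{T:elm-dec} with a local Bramble-Hilbert-type bound for the element-wise best errors. Applying Theorem \ref{T:elm-dec} gives
\[
 E(v,\mesh)
 \leq
 \DcConst(\mesh) \left( \sum_{\elm\in\mesh} e(v,\elm)^2 \right)^{\frac12},
\]
so it suffices to prove, for each $\elm\in\mesh$, the local estimate $e(v,\elm) \leq C_{d,\pdeg,s}\, h_\elm^{s-1} \Sobseminorm{v}{s}{2}{\elm}$. Inserting this into the sum together with the bound $\DcConst(\mesh)\leq C(d,\pdeg,\sigma_\mesh)$ from Theorem \ref{T:elm-dec} yields the claimed inequality with the asserted dependence on the parameters.

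For integer $s$ with $1\leq s\leq\pdeg+1$, I would proceed as follows. Since $\Poly{s-1}{\elm}\subset\Poly{\pdeg}{\elm}$, we have $e(v,\elm)\leq\error{v-P}{\elm}$ for any $P\in\Poly{s-1}{\elm}$. Choose $P$ as the unique polynomial in $\Poly{s-1}{\elm}$ satisfying $\int_\elm\partial^\alpha(v-P)=0$ for every multi-index $\alpha$ with $|\alpha|\leq s-1$; this determines $P$ uniquely, as the number of equations equals $\dim\Poly{s-1}{\elm}$. Then each $\partial^\alpha(v-P)$ with $|\alpha|\leq s-1$ has zero mean on $\elm$, so the Poincar\'e inequality yields
\[
 \Lebnorm{\partial^\alpha(v-P)}2\elm
 \leq
 C_P h_\elm \Lebnorm{\grd\partial^\alpha(v-P)}2\elm.
\]
Starting from $\error{v-P}{\elm}^2=\sum_{|\alpha|=1}\Lebnorm{\partial^\alpha(v-P)}2\elm^2$ and iterating this inequality $s-1$ times, one reaches $(C_Ph_\elm)^{2(s-1)}$ multiplied by a weighted sum of $\Lebnorm{\partial^\beta(v-P)}2\elm^2$ over $|\beta|=s$; since $P$ has degree $\leq s-1$, the polynomial $P$ drops out and the sum equals a weighted $\Sobseminorm{v}{s}{2}{\elm}^2$. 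A careful bookkeeping of how the weights accumulate when each gradient spawns all $d$ next-order derivatives produces the combinatorial factor $\frac{s!}{(\lceil s/d\rceil!)^d}$.

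For non-integer $s$, I would pull back the approximation problem on $\elm$ to the reference simplex $\Simplexd$ via an affine bijection $A:\Simplexd\to\elm$, apply a fractional Bramble-Hilbert estimate on $\Simplexd$ (for instance using the averaged Taylor polynomial of Dupont-Scott, as already invoked in the proof of Theorem \ref{T:pder-dec}), and scale back via the transformation rule to recover the power $h_\elm^{s-1}$, with the remaining geometric factors absorbed into a constant depending on $d$, $\pdeg$, $s$ and $\sigma_\mesh$. The main obstacle lies in the integer case: while $C_P^{s-1}$ arises transparently from iterated Poincar\'e and $\DcConst(\mesh)$ from Theorem \ref{T:elm-dec}, the sharp factor $\frac{s!}{(\lceil s/d\rceil!)^d}$ demands a careful accounting of the multinomial weights $\binom{s}{\beta}$ produced at successive levels of iteration and of the redundancies among cross-derivatives of different orders.
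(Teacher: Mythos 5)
Your proposal follows the paper's own proof: combine Theorem~\ref{T:elm-dec} with the elementwise Bramble--Hilbert estimate $e(v,\elm)\leq C\,h_\elm^{s-1}\Sobseminorm{v}{s}{2}{\elm}$, which the paper obtains from Dupont--Scott plus a scaling argument for general $s$ and, for the explicit integer-$s$ constant, from an iterated Poincar\'e argument referencing Verf\"urth \cite{Verfuerth:99}. One caution on the constant, which you flag yourself as the remaining obstacle: the iteration you describe produces $\sum_{|\alpha|=s}\binom{s}{\alpha}\Lebnorm{\partial^\alpha v}{2}{\elm}^2$, and bounding this by $\bigl(\max_{|\alpha|=s}\tfrac{s!}{\alpha!}\bigr)\Sobseminorm{v}{s}{2}{\elm}^2$ yields the factor $\bigl(\max_{|\alpha|=s}\tfrac{s!}{\alpha!}\bigr)^{1/2}$, which for $d=2$, $s=3$ is $\sqrt{3}\approx 1.73$ and exceeds the stated $\tfrac{3!}{(2!)^2}=1.5$; so the sharp combinatorial factor is not a direct consequence of the naive multinomial bookkeeping and requires the refinement in \cite{Verfuerth:99}. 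A minor point: uniqueness of the moment-annihilating polynomial $P$ needs the square linear system to be non-degenerate, not merely that the counts match.
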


\begin{proof}
Combine Theorem \ref{T:elm-dec} and the Bramble-Hilbert inequality
\begin{equation}\label{BrambleHilbert}
 e(v,\elm)
 \leq
 C h_\elm^{s-1} \Sobseminorm{v}{s}{2}{\elm}
\end{equation}
where $C$ depends on $d$, $\pdeg$, $s$ and the shape coefficient of $d$-simplex 
$\elm$.  The latter follows, e.g., from \cite[Theorems 3.2 and 
6.1]{Dupont.Scott:80} and a standard scaling argument.  The explicit constant 
for $s\in\N$ is ensured by choosing a polynomial that allows an iterative 
application of the Poincar\'e inequality; see R.~Verf\"urth 
\cite[\S3]{Verfuerth:99}.
\end{proof}

We compare the bound of Theorem \ref{T:ErrEst} and those available via 
Lagrange and Scott-Zhang interpolation \cite[\S4]{Scott.Zhang:90}; error bounds 
via Cl\'ement interpolation \cite{Clement:75} coincide with the latter ones for 
homogeneous Dirichlet boundary conditions.

Doing so, we fix a mesh and vary through functions -- a viewpoint differing 
from the usual one where a function is fixed and meshes vary.  Notice that in 
the former case to be considered the most convenient choice of $s$ is not 
necessarily the maximal one; indeed, the product $h_\elm^s 
\Sobseminorm{v}{s}{2}{\elm}$ may not be monotone decreasing in $s$ for certain 
functions. This is closely related to the following drawback of the bounds via 
Lagrange interpolation for $d\geq2$.  Consider a sequence of functions 
$(v_n)_n$ in $\Hil{\lfloor d/2 \rfloor +1}(\Omega)$ converging to a function not 
better than $\Hil{d/2}(\Omega)$.  Then any bound via Lagrange interpolation 
blows up, while those of Theorem \ref{T:ErrEst} and via Scott-Zhang 
interpolation remain bounded for suitable $s\in\clsint1{d/2}$.

On the other hand, bounds via Lagrange interpolation can be readily derived 
in terms of broken Sobolev seminorms on elements; see, e.g., 
\cite[(4.4.20)]{Brenner.Scott:08} and modify the last three steps in its 
proof.  While Theorem \ref{T:ErrEst} shares this property, the bounds via 
Scott-Zhang interpolation \cite{Scott.Zhang:90} involve regularity across 
element boundaries.  The broken Sobolev norms on elements have the following 
advantage: If $E(v,\mesh)=0$, then the bounds of Theorem \ref{T:ErrEst} and 
Lagrange interpolation also vanish, while those via Scott-Zhang interpolation 
\cite{Scott.Zhang:90} do not vanish for $s\in\opnint1{3/2}$ and are not 
applicable, i.e.\ are $\infty$, for $s\geq3/2$ whenever $v$ has non-constant 
gradient. Similarly as before, this has its counterparts for smooth functions.  
To illustrate this, let $(v_n)_n$ be a sequence of functions in 
$\Hil{\pdeg+1}(\Omega)$ and consider various conditions ensuring convergence to 
a function in $\Splines(\mesh)$ with non-constant gradient.  If  $E(v_n,\mesh) 
\to 0$, then also the right-hand side of Theorem \ref{T:elm-dec} tends to $0$.  
If additionally the restrictions $v_n{}_{|\elm}$ converge in $\Hil{s}(\elm)$ 
for some $1 < s \leq \pdeg+1$, then the corresponding bound of Theorem 
\ref{T:ErrEst} tends to $0$.  The same holds for bounds of Lagrange 
interpolation, but only $s\in\ocint{d/2}{\pdeg+1}$ are admissible.  The 
situation for bounds via Scott-Zhang interpolation is different: if $1<s<3/2$, 
then the corresponding bound does not tend to $0$ and if $3/2\leq s \leq 
\pdeg+1$, it even blows up.

Summarizing, one may say that the approach via Theorem \ref{T:elm-dec} 
combines advantages of the available bounds for both Lagrange interpolation and 
interpolation of non-smooth functions. In particular, for 
$1 < s \leq d/2$ with $d\geq3$, the error bounds in terms of piecewise 
regularity seem to be new.  Bounds in terms of broken regularity are useful 
also in the context of surface finite element methods (SFEM); see 
F.\ Camacho and A.\ Demlow \cite{Camacho.Demlow:P}.

\subsection{Adaptive tree approximation of gradients}
\label{S:tree-approx}
%
The upper bound of the global best error in Theorem \ref{T:ErrEst} locally 
combines meshsize and higher order derivatives.  This suggests that, for a 
given target function, certain meshes a more convenient than others.
%
I.\ Babu\v{s}ka and W.\ C.\ Rheinboldt \cite{Babuska.Rheinboldt:78a} formally 
derive the following criteria, also called equidistribution principle: a mesh 
minimizing the aforementioned upper bound subject to a fixed number of elements 
equidistributes the element contributions, e.g.,  $h_\elm 
\Sobseminorm{v}{2}{2}{\elm}$ does not depend on $K\in\elm$.  Obviously this 
requires in general graded meshes.

An algorithmically simple way of constructing graded meshes arises from a 
prescribed rule for subdividing elements, which induces a tree structure.  An 
example of this was already in 1967 studied by S.\ Birman and M.\ Solomyak 
\cite{Birman.Solomyak:67}.  For continuous piecewise polynomial functions over 
conforming simplicial meshes, one can use bisection with recursive completion; 
for an overview of this technique, see e.g.\ Nochetto et al.\ 
\cite[\S4]{Nochetto.Siebert.Veeser:09}.  Although recursive bisection limits 
mesh flexibility, in particular mesh grading, the discussion in R.\ DeVore 
\cite[\S6]{DeVore:98} and the results of P.\ Binev et al.\ 
\cite{Binev.Dahmen.DeVore.Petrushev:02} reveal that the regularity dictating 
the asymptotic balance of global best error and number of degree of freedoms is 
close to the best possible one.  In particular, the global best $H^1$-error of 
suitably graded two-dimensional meshes decays like $\#\mesh^{-\frac12}$ if 
$u\in\Sob{2}{p}(\Omega)$ with $1<p\leq 2$; notice that the latter is weaker 
than the requirement $u\in\Hil2(\Omega)$ corresponding to the decay rate with 
quasi-uniform meshes and that $p=1$ corresponds to a optimal Sobolev embedding.

The goal of this section is to derive and analyze an instance of the tree 
algorithm by P.~Binev and R.~DeVore \cite{Binev.DeVore:04} that constructs near 
best bisection meshes for the approximation of gradients with piecewise 
polynomial functions.  It may be used for coarsening in adaptive algorithms 
iterating the main steps
 \[
  \texttt{error reduction} \to \texttt{sparsity adjustment}.
 \]
Interestingly, this scheme can be applied also if a good a posteriori error 
estimator is not available.  It includes algorithms like in 
\cite[\S8]{Binev.Dahmen.DeVore:04} and algorithms that are based upon 
discretizing the steps of an infinite-dimensional solver.  Moreover, the 
following algorithm can be used to compute an approximation of the best error of 
bisection meshes with a given number of elements. Such approximations are of 
interest as a benchmark for corresponding adaptive finite element methods.

In order to introduce the algorithm and to state its main property, we need the 
following notation.  Let $\mesh_0$ be an initial mesh of $\Omega$ that is 
admissible for bisection with recursive completion; see, e.g., \cite[Assumption 
11.1 on p.~453]{Nochetto.Siebert.Veeser:09}.  Denote by 
$\refmnts'$ the set of all meshes that can be generated by bisections without 
completion from $\mesh_0$; these meshes are not necessarily conforming and 
each one correponds to a subtree in the master tree given by $\mesh_0$ and the 
bisection rule of a single $d$-simplex.  Moreover, denote by $\refmnts$ the 
subset of $\refmnts'$ of all meshes that are conforming.  If 
$\mesh'\in\refmnts'$ is a possibly non-conforming mesh, denote by 
$\texttt{complete}(\mesh')$ the smallest refinement of $\mesh'$ in $\refmnts$.  
Since $\mesh_0$ is admissible, Binev et al.\ 
\cite[Theorem~2.4]{Binev.Dahmen.DeVore:04} if 
$d=2$ and R.~Stevenson \cite[Theorem~6.1]{Stevenson:08} otherwise ensure the 
non-obvious relationship
\begin{equation}\label{complete}
 \#\texttt{complete}(\mesh') - \#\mesh_0
 \leq
 \Ccmpl (\#\mesh' - \#\mesh_0)
\end{equation}
with $\Ccmpl$ depending only on $\mesh_0$.  For any $N\geq\#\mesh_0$, we 
associate best errors with the two mesh families $\refmnts$ and $\refmnts'$.  
Namely, the best approximation error 
\begin{equation*}
 \sigma(v,N)
 \defas
 \min \big\{ E \big( v,\Splines(\mesh) \big)
  \mid \mesh\in\refmnts,\ \#\mesh\leq 
N \big\}
\end{equation*}
with continuous piecewise polynomial functions over conforming meshes with less 
than $N$ elements, which is greater than the corresponding best error 
\begin{equation}
\label{nlberr}
 \sigma'(v,N)
 \defas
 \min \{ E\big( v,\Splines^{\pdeg,-1}(\mesh') \big)
   \mid \mesh'\in\refmnts',\ \#\mesh'\leq N \}
\end{equation}
with possibly discontinuous piecewise polynomial functions over possibly 
non-con\-form\-ing meshes.  There also holds an inequality in the oppposite 
direction, which may be seen as a generalization of Theorem \ref{T:elm-dec}.

\begin{thm}[Non-conforming decoupling of elements]
\label{T:n-elm-dec}
Assume that the initial mesh $\mesh_0$ is conforming, admissible, and that 
all its stars are $(d-1)$-face-connected.  Then there exist constants $C_1$ 
and $C_2$ depending only on $\mesh_0$ such that, for any $v\in X$ and 
$N\geq N_0\defas\#\mesh_0$, there holds
\[
 \sigma(v,N)
 \leq
 C_1 \sigma'\left(
  v, \left\lfloor \frac{N-N_0}{C_2} \right\rfloor + N_0
  \right).
\]
\end{thm}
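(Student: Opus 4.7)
The plan is to translate a near-optimal non-conforming piecewise polynomial approximation into a conforming one by completing the underlying refinement and then invoking Theorem~\ref{T:elm-dec} on the resulting conforming mesh to absorb the cost of enforcing continuity and boundary conditions.

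Concretely, I would set $C_2\defas\Ccmpl$ and $N'\defas\lfloor(N-N_0)/C_2\rfloor+N_0$, pick $\mesh'\in\refmnts'$ with $\#\mesh'\leq N'$ and $V'\in\Splines^{\pdeg,-1}(\mesh')$ realizing the minimum in \eqref{nlberr}, so that $\error{v-V'}{\Omega}=\sigma'(v,N')$, and define the conforming refinement $\mesh\defas\texttt{complete}(\mesh')\in\refmnts$. By \eqref{complete} and the choice of $C_2$, $\#\mesh-N_0\leq\Ccmpl(\#\mesh'-N_0)\leq N-N_0$, so $\mesh$ is admissible in the definition of $\sigma(v,N)$.

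Since $\mesh$ is a refinement of $\mesh'$, each $\elm\in\mesh$ is contained in some $\elm''\in\mesh'$ and consequently $V'{}_{|\elm}\in\Poly{\pdeg}{\elm}$. This yields $e(v,\elm)\leq\error{v-V'}{\elm}$ for every $\elm\in\mesh$, and summing squares gives
\[
 \sum_{\elm\in\mesh} e(v,\elm)^2
 \leq
 \sum_{\elm\in\mesh} \error{v-V'}{\elm}^2
 =
 \error{v-V'}{\Omega}^2
 =
 \sigma'(v,N')^2.
\]
I would then apply Theorem~\ref{T:elm-dec}, or the appropriate variant from Corollary~\ref{C:Dirichlet} or Corollary~\ref{C:Neumann} tailored to the boundary conditions encoded in $\Splines(\mesh)$, to the conforming refinement $\mesh$. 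This produces
\[
 \sigma(v,N)
 \leq
 E\bigl(v,\Splines(\mesh)\bigr)
 \leq
 C \left(\sum_{\elm\in\mesh} e(v,\elm)^2\right)^{\frac12}
 \leq
 C\,\sigma'(v,N'),
\]
giving the claim with $C_1\defas C$ and $C_2=\Ccmpl$.

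The main obstacle will be to verify that the constant $C$ emerging from Theorem~\ref{T:elm-dec} on $\mesh$ depends only on $\mesh_0$, not on $\mesh'$ or the refinement depth. This reduces to two standard facts about bisection with recursive completion from an admissible initial mesh: the shape coefficient $\sigma_\mesh$ stays uniformly bounded in terms of $\sigma_{\mesh_0}$, because recursive bisection produces only finitely many similarity classes of $d$-simplices, and $(d-1)$-face-connectedness of stars is inherited by conforming refinements, since subdividing elements only subdivides and therefore cannot disconnect the existing stars of $\mesh_0$. Both are routine consequences of the bisection framework (cf.~\cite{Binev.Dahmen.DeVore:04,Stevenson:08}), and together they let Theorem~\ref{T:elm-dec} supply a constant $C_1$ depending only on $d$, $\pdeg$ and $\mesh_0$, as required.
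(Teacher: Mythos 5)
Your proof is correct and takes essentially the same approach as the paper: complete the optimal non-conforming mesh, apply Theorem~\ref{T:elm-dec} on the completion with uniformity of the decoupling constant following from shape regularity and inherited $(d-1)$-face-connectedness of $\refmnts$. The only cosmetic difference is that you pass through the local best errors $e(v,\elm)$ and the restriction of the non-conforming minimizer $V'$, whereas the paper packages the same step as the inclusion $\Splines^{\pdeg,-1}(\mesh)\subset\Splines^{\pdeg,-1}(\mesh')$ combined with the identity $E\bigl(v,\Splines^{\pdeg,-1}(\mesh)\bigr)^2=\sum_{\elm\in\mesh}e(v,\elm)^2$.
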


\begin{proof}
Set $N'\defas\lfloor (N-N_0)/\Ccmpl \rfloor + N_0$ and choose an optimal 
possibly non-conforming mesh $\mesh\in\refmnts'$ such that 
$E\big( v,\Splines^{\pdeg,-1}(\mesh') \big) = \sigma'(v,N')$.  Since 
$\mesh_0$ is admissible, \eqref{complete} yields $\#\mesh\leq N$ for
$\mesh\defas\texttt{complete}(\mesh')$.  Hence
\begin{align*}
 \sigma(v,N)
 &\leq
 E\big( v, \Splines(\mesh) \big)
 \leq
 \DcConst(\mesh) E\big( v, \Splines^{\pdeg,-1}(\mesh) \big)
 \leq
 \DcConst(\mesh) E\big( v, \Splines^{\pdeg,-1}(\mesh') \big)
\\
 &=
 \DcConst(\mesh) \sigma'(v,N')
\end{align*}
thanks to $\Splines^{\pdeg,-1}(\mesh)\subset\Splines^{\pdeg,-1}(\mesh')$.  
The shape coefficient for any mesh in $\refmnts$ is bounded in terms of the one 
of $\mesh_0$; see, e.g., \cite[Lemma 4.1]{Nochetto.Siebert.Veeser:09}.   
Moreover, any mesh in $\refmnts$ inherits from $\mesh_0$ that all its stars are 
$(d-1)$-face-connected.  Theorem \ref{T:elm-dec} therefore implies 
$\DcConst(\mesh)\leq\DcConst$, were $\DcConst$ depends only on $\mesh_0$.  The 
claimed inequality thus holds with $C_1=\DcConst$ and $C_2=\Ccmpl$.
\end{proof}

The number of competing meshes for the best errors grows exponentially with 
$N-\#\mesh_0$.  Nevertheless the following variant of adaptive tree 
approximation by P.~Binev and R.~DeVore \cite{Binev.DeVore:04} constructs near 
best meshes with $O(N)$ operations and computations of the local error 
functional
\[
 \eps(\elm)\defas e(v,\elm)^2,
\]
where $\elm$ is any $d$-simplex and $v\in\Hil1(\Omega)$ the target function. 
Given a threshold $t>0$, we proceed as follows:

\smallskip\noindent
\begin{itemize}
 \item[] $\mesh_t'\defas\emptyset$;
 \item[] \texttt{for all} $\elm\in\mesh_0$
 \begin{itemize}
  \item[] $\eta(\elm) \defas \eps(\elm)$;
  \item[] $\texttt{if } \eta(\elm)>t \texttt{ then grow}(\elm)$;
 \end{itemize}
 \item[] \texttt{end for}
 \item[] $\mesh_t \defas \texttt{complete}(\mesh_t')$;
\end{itemize}

\smallskip\noindent
where $\texttt{grow}(\elm)$ grows the subtree generating $\mesh_t'$ and 
collects its leafs by

\smallskip\noindent
\begin{itemize}
 \item[] $(\elm_1,\elm_2)=\texttt{bisect}(\elm)$;
 \item[] \texttt{for} $i=1,2$
 \begin{itemize}
  \item[] $\eta(\elm_i) \defas \big[
      \eps(\elm_i)^{-1} + \eta(\elm)^{-1} 
    \big]^{-1}$
  \item[] $\texttt{if } \eta(\elm_i)>t$ \texttt{ then}
  \begin{itemize}
   \item[] $\texttt{grow}(\elm_i)$;
  \end{itemize}
  \item[] \texttt{else}
  \begin{itemize}
   \item[] $\mesh_t' \defas \mesh_t' \cap \{\elm_i\}$; 
  \end{itemize}
 \end{itemize}
\end{itemize}

\smallskip\noindent
and $\texttt{bisect}(\elm)$ implements the bisection of a single simplex; 
see, e.g., \cite[\S4.1]{Nochetto.Siebert.Veeser:09}.

The core of this algorithm is the thresholding algorithm in 
\cite{Birman.Solomyak:67} with the following important difference: the 
local functional $\eta(\elm)$ depends not only on the local error functionals 
but also on their history within in the subdivison hierarchy.

There are noteworthy variants of this algorithm.  In particular, the threshold 
$t$ can be avoided by successively bisecting the elements maximizing the 
indicators $\eta(\elm)$ of the current mesh; see \cite{Binev.DeVore:04}.  In 
this case one may also ensure the conformity of the mesh at any intermediate 
step.  For these variants, the following theorem presents only non-essential 
changes.

\begin{thm}[Tree approximation]
\label{T:tree-approx}
Assume that the initial mesh $\mesh_0$ is conforming, admissible, and that 
all its stars are $(d-1)$-face-connected.  Then there exist constants $C_1$ 
and $C_2$ depending only on $\mesh_0$ such that, for any $v\in X$ and any 
threshold $t>0$, the output mesh $\mesh$ of the tree algorithm verifies
\[
 E\big( v,\Splines(\mesh) \big)
 \leq
 C_1 \sigma' \left( v, \left\lceil \frac{\#\mesh}{C_2} \right\rceil \right)
\]
whenever $\#\mesh\geq C_2(2\#\mesh_0+1)$.
\end{thm}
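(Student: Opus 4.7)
The plan is to chain three ingredients: the conforming decoupling of Theorem \ref{T:elm-dec} applied to the output mesh $\mesh$, the monotonicity of $\eps(\elm)=e(v,\elm)^2$ under bisection refinement, and the near-best tree approximation result of Binev--DeVore \cite{Binev.DeVore:04} for the subadditive functional $\eps$ on the master bisection tree rooted at $\mesh_0$, together with the completion estimate \eqref{complete}. Since $\mesh=\texttt{complete}(\mesh_t')\in\refmnts$ is conforming, its shape coefficient and the $(d-1)$-face-connectedness of its stars are inherited from $\mesh_0$; Theorem \ref{T:elm-dec} therefore yields
\[
 E\bigl(v,\Splines(\mesh)\bigr)^2 \leq C^2 \sum_{\elm\in\mesh} e(v,\elm)^2
\]
for a constant $C$ depending only on $\mesh_0$.

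Next I would pass from $\mesh$ to the non-conforming intermediate $\mesh_t'\in\refmnts'$ by monotonicity. If $\elm_1,\dots,\elm_n\in\mesh$ partition a parent $\elm'\in\mesh_t'$ and $P\in\Poly{\pdeg}{\elm'}$ is the local best polynomial on $\elm'$, then
\[
 \sum_{j=1}^n e(v,\elm_j)^2 \leq \sum_{j=1}^n \error{v-P}{\elm_j}^2 = e(v,\elm')^2,
\]
so $\sum_{\elm\in\mesh} e(v,\elm)^2 \leq \sum_{\elm'\in\mesh_t'} e(v,\elm')^2$. The indicators $\eta$ of the algorithm coincide with the harmonic-mean construction of \cite{Binev.DeVore:04} for the subadditive functional $\eps$, so their near-best theorem applies to the bisection master tree rooted at $\mesh_0$ and produces constants $\alpha>1$ and $C_{\text{BDV}}$, depending only on $\mesh_0$, such that
\[
 \sum_{\elm'\in\mesh_t'} e(v,\elm')^2
 \leq
 C_{\text{BDV}}^2\, \sigma'(v,M)^2,
 \qquad
 M \defas \left\lceil\frac{\#\mesh_t'-\#\mesh_0}{\alpha}\right\rceil + \#\mesh_0,
\]
the identification with $\sigma'(v,M)^2$ coming from the fact that $\Splines^{\pdeg,-1}(\mesh^*)$ decouples across elements, turning the inequality \eqref{local<=global} into an equality.

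Finally I would carry out the cardinality bookkeeping. The completion estimate \eqref{complete} gives $\#\mesh_t'-\#\mesh_0\geq(\#\mesh-\#\mesh_0)/\Ccmpl$, so taking $C_2$ to be a suitable multiple of $\alpha\Ccmpl$ produces $M\geq (\#\mesh-\#\mesh_0)/C_2+\#\mesh_0$; the hypothesis $\#\mesh\geq C_2(2\#\mesh_0+1)$ provides precisely the slack needed to absorb the additive $\#\mesh_0$ and ensure $M\geq\lceil\#\mesh/C_2\rceil$. Monotonicity of $\sigma'(v,\cdot)$ then gives $\sigma'(v,M)\leq\sigma'\bigl(v,\lceil\#\mesh/C_2\rceil\bigr)$, and chaining the three displayed inequalities yields the claim with $C_1 \defas C\, C_{\text{BDV}}$. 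The principal obstacle is the careful invocation of the Binev--DeVore near-best theorem on the bisection master tree in the non-conforming setting and the accompanying cardinality bookkeeping; the hypothesis on $\#\mesh$ is calibrated precisely so that all additive terms involving $\#\mesh_0$ can be absorbed without spoiling the ratio $\#\mesh/C_2$.
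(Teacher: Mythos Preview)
Your proposal is correct and follows essentially the same approach as the paper: both chain the decoupling Theorem~\ref{T:elm-dec}, the subadditivity of $\eps(\elm)=e(v,\elm)^2$ under bisection, the Binev--DeVore near-best tree result, and the completion estimate~\eqref{complete}, differing only in the order of these steps and in the precise form of the tree-approximation statement invoked. The paper applies the tree result to $\mesh_t'$ first, then passes to the completed mesh via the inclusion $\Splines^{\pdeg,-1}(\mesh_t')\subset\Splines^{\pdeg,-1}(\mesh_t)$, and finally uses decoupling, whereas you apply decoupling on $\mesh$ first and then use subadditivity to descend to $\mesh_t'$; the two routes are equivalent.
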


\begin{proof}
The local error functional $\eps(\elm) = e(v,\elm)^2 = 
\inf_{P\in\Poly{\pdeg}{\elm}} \error{v-P}{\elm}^2$ obviously depends only on 
the target function $v$ and the simplex $\elm$.  Moreover, it is subadditive: 
if $(\elm_1,\elm_2)=\texttt{bisect}(\elm)$, then $\eps(\elm_1) + \eps(\elm_2) 
\leq \error{v-P}{\elm_1}^2 + \error{v-P}{\elm_2}^2$ for any 
$P\in\Poly{\pdeg}{\elm}$ and thus
\[
 \eps(\elm_1) + \eps(\elm_2) \leq \eps(\elm).
\]
Hence Theorem 4 of P.~Binev's contribution in \cite{Oberwolfach4.3:07} 
applies to the above for-loop that constructs $\mesh_t'$.  Writing
\[
 N_0\defas\#\mesh_0,
\quad
 N'\defas\#\mesh_t',
\quad
 L'\defas N'-N_0
\]
and observing $E\big( v,\Splines^{\pdeg,-1}(\mesh) \big)^2 = 
\sum_{\elm\in\mesh} \eps(\elm)$ for any $\mesh'\in\refmnts'$, we therefore have
\[
 E\big( v,\Splines^{\pdeg,-1}(\mesh_t') \big)
 \leq
 \min_{0 \leq l \leq L'} \left( 1 +
   \frac{l + \min\{l,N_0\}}{L' + 1 - l}
   \right) \sigma'(v,N_0+l).
\]
Under the assumption $N'\geq2N_0$ this simplifies to
\[
 E\big( v,\Splines^{\pdeg,-1}(\mesh_t') \big)
 \leq
 \min_{2N_0 \leq n \leq N'} \frac{N'+1}{N'+1-n} \sigma'(v,n).
\]
Since $\mesh_0$ is admissible, \eqref{complete} ensures
\[
 N - N_0
 \leq
 \Ccmpl (N' - N_0)
\]
for $N\defas\#\mesh_t$.  In view of $N\geq N'\geq 2N_0$, we may use the simpler 
inequality $N\leq \Tilde C_2 N'$ with $\Tilde C_2=2\Ccmpl$.  Since $x\mapsto 
x/(x-c)$ is decreasing and $E$ monotone, one thus can derive
\[
 E\big( v,\Splines^{\pdeg,-1}(\mesh_t) \big)
 \leq
 \min_{2N_0 \leq n \leq N/\Tilde C_2}
   \frac{N}{N - \Tilde C_2 n} \sigma'(v,n)
\]
whenever $N\geq 2\Tilde C_2 N_0$.  Similarly as in the proof of Theorem 
\ref{T:n-elm-dec}, Theorem \ref{T:ErrEst} therefore implies
\[
 E\big( v,\Splines(\mesh_t) \big)
 \leq
 \delta \min_{2N_0 \leq n \leq N/\Tilde C_2}
   \frac{N}{N - \Tilde C_2 n} \sigma'(v,n)
\]
where $\delta$ depends only on $\mesh_0$.  Choosing $n\in\N$ such that
$N/(2\Tilde C_2) -1 < n \leq N/(2\Tilde C_2)$, the claimed inequality 
follows with $C_1=2\delta$ and $C_2=2\Tilde C_2$ as above.
\end{proof}

Theorem \ref{T:tree-approx} is of non-asymptotic nature; as can be seen from 
the proof, the condition $\#\mesh\geq C_2(2\#\mesh_0+1)$ arising from 
$N'\geq2N_0$ is of simplifying nature.  In particular, it does not suppose any 
regularity beyond $\Hil1(\Omega)$ of the target function.  

In \cite[\S7]{Binev.DeVore:04} a similar algorithm is proposed.  It relies on 
the local error functionals
\[
 \Tilde\eps(\elm)
 \defas
 \inf\big\{
  \error{v-V}{\omega_\elm} \mid V\in\Splines(\mesh_\elm)
  \big\}
\]
where $\mesh_\elm$ is the set of elements of the so-called minimal ring 
$R^-(\elm)$ around $\elm$ given by
\[
 R^-(\elm)
 \defas
 \bigcap_{\mesh\in\refmnts:\mesh\ni\elm} R(\elm,\mesh)
\quad\text{with}\quad
 R(\elm,\mesh)
 \defas
 \bigcup_{\elm'\in\mesh:\elm'\cap\elm\neq\emptyset} \elm'.
\]
In view of the minimality of the ring, these error functionals do not depend on 
the surrounding mesh.  They are not subadditive, but weakly subadditive with 
respect to repeated bisections and so another variant of the tree algorithm 
with a similar statement to Theorem \ref{T:tree-approx} still applies.  The 
local error functionals $e(\elm)$ are however simpler to implement and can be 
combined with single bisection.  Thus, the use of Theorem \ref{T:elm-dec} here 
permits an algorithmic simplification.

\subsection*{Acknowledgments}
The author wants to thank Francesco Mora for useful discussions regarding 
Theorem \ref{T:elm-dec}.


\begin{thebibliography}{10}

\bibitem{Oberwolfach4.3:07}
{\em Adaptive numerical methods for {PDE}s}, Oberwolfach Rep., 4 (2007),
  pp.~1663--1739.
\newblock Abstracts from the workshop held June 10--16, 2007, Organized by Rolf
  Rannacher, Endre S{\"u}li and R{\"u}diger Verf{\"u}rth, Oberwolfach Reports.
  Vol. 4, no. 3.

\bibitem{Babuska.Rheinboldt:78a}
{\sc I.~Babu{\v{s}}ka and W.~C. Rheinboldt}, {\em Error estimates for adaptive
  finite element computations}, SIAM J. Numer. Anal., 15 (1978), pp.~736--754.

\bibitem{Bebendorf:03}
{\sc M.~Bebendorf}, {\em A note on the {P}oincar{\'e} inequality for convex
  domains}, Z. Anal. Anwendungen, 22 (2003), pp.~751--756.

\bibitem{Binev.Dahmen.DeVore:04}
{\sc P.~Binev, W.~Dahmen, and R.~DeVore}, {\em Adaptive finite element methods
  with convergence rates}, Numer. Math., 97 (2004), pp.~219--268.

\bibitem{Binev.Dahmen.DeVore.Petrushev:02}
{\sc P.~Binev, W.~Dahmen, R.~DeVore, and P.~Petrushev}, {\em Approximation
  classes for adaptive methods}, Serdica Math. J., 28 (2002), pp.~391--416.
\newblock Dedicated to the memory of Vassil Popov on the occasion of his 60th
  birthday.

\bibitem{Binev.DeVore:04}
{\sc P.~Binev and R.~DeVore}, {\em Fast computation in adaptive tree
  approximation}, Numer. Math., 97 (2004), pp.~193--217.

\bibitem{Birman.Solomyak:67}
{\sc M.~{\v{S}}. Birman and M.~Z. Solomjak}, {\em Piecewise polynomial
  approximations of functions of classes {$W_{p}{}^{\alpha }$}}, Mat. Sb.
  (N.S.), 73 (115) (1967), pp.~331--355.

\bibitem{Bonito.Nochetto:10}
{\sc A.~Bonito and R.~H. Nochetto}, {\em Quasi-optimal convergence rate of an
  adaptive discontinuous {G}alerkin method}, SIAM J. Numer. Anal., 48 (2010),
  pp.~734--771.

\bibitem{Bourgain.Brezis.Mironescu:01}
{\sc J.~Bourgain, H.~Brezis, and P.~Mirnoescu}, {\em Another look at {S}obolev
  spaces}, in Optimal control and partial differential equations, E.~R.
  J.~L.~Mendali and A.~Sulem, eds., IOS Press, 2001, pp.~439--455.

\bibitem{Braess:01}
{\sc D.~Braess}, {\em Finite elements}, Cambridge University Press, Cambridge,
  second~ed., 2001.
\newblock Theory, fast solvers, and applications in solid mechanics, Translated
  from the 1992 German edition by Larry L. Schumaker.

\bibitem{Brenner:96}
{\sc S.~C. Brenner}, {\em Two-level additive {S}chwarz preconditioners for
  nonconforming finite element methods}, Math. Comp., 65 (1996), pp.~897--921.

\bibitem{Brenner:99}
\leavevmode\vrule height 2pt depth -1.6pt width 23pt, {\em Convergence of
  nonconforming multigrid methods without full elliptic regularity}, Math.
  Comp., 68 (1999), pp.~25--53.

\bibitem{Brenner.Scott:08}
{\sc S.~C. Brenner and L.~R. Scott}, {\em The mathematical theory of finite
  element methods}, vol.~15 of Texts in Applied Mathematics, Springer, New
  York, third~ed., 2008.

\bibitem{Camacho.Demlow:P}
{\sc F.~Camacho and A.~Demlow}, {\em {$L_2$} and pointwise a posteriori error
  estimates for fem for elliptic pde on surfaces}, tech. rep., University of
  Kentucky, 2013.

\bibitem{Clement:75}
{\sc P.~Cl\'{e}ment}, {\em Approximation by finite element functions using
  local regularization}, Revue Francaise Automat. Informat. Recherche.
  Operationelle Ser. Rouge Anal. Num\'{e}r., 9 (1975), pp.~77--84.

\bibitem{Dekel.Leviatan:04}
{\sc S.~Dekel and D.~Leviatan}, {\em The {B}ramble-{H}ilbert lemma for convex
  domains}, SIAM J. Math. Anal., 35 (2004), pp.~1203--1212.

\bibitem{DeVore:98}
{\sc R.~A. DeVore}, {\em Nonlinear approximation}, in Acta numerica, 1998,
  vol.~7 of Acta Numer., Cambridge Univ. Press, Cambridge, 1998, pp.~51--150.

\bibitem{Dupont.Scott:80}
{\sc T.~Dupont and R.~Scott}, {\em Polynomial approximation of functions in
  {S}obolev spaces}, Math. Comp., 34 (1980), pp.~441--463.

\bibitem{Gudi:10}
{\sc T.~Gudi}, {\em A new error analysis for discontinuous finite element
  methods for linear elliptic problems}, Math. Comp., 79 (2010),
  pp.~2169--2189.

\bibitem{Karakashian.Pascal:03}
{\sc O.~A. Karakashian and F.~Pascal}, {\em A posteriori error estimates for a
  discontinuous {G}alerkin approximation of second-order elliptic problems},
  SIAM J. Numer. Anal., 41 (2003), pp.~2374--2399 (electronic).

\bibitem{Laugesen.Siudeja:10}
{\sc R.~S. Laugesen and B.~A. Siudeja}, {\em Minimizing {N}eumann fundamental
  tones of triangles: an optimal {P}oincar\'e inequality}, J. Differential
  Equations, 249 (2010), pp.~118--135.

\bibitem{Nochetto.Siebert.Veeser:09}
{\sc R.~H. Nochetto, K.~G. Siebert, and A.~Veeser}, {\em Theory of adaptive
  finite element methods: an introduction}, in Multiscale, nonlinear and
  adaptive approximation, Springer, Berlin, 2009, pp.~409--542.

\bibitem{Payne.Weinberger:60}
{\sc L.~E. Payne and H.~F. Weinberger}, {\em An optimal
  {P}oincar{\'e}-inequality for convex domains}, Archive Rat. Mech. Anal., 5
  (1960), pp.~286--292.

\bibitem{Sacchi.Veeser:06}
{\sc R.~Sacchi and A.~Veeser}, {\em Locally efficient and reliable a posteriori
  error estimators for {D}irichlet problems}, Math. Models Methods Appl. Sci.,
  16 (2006), pp.~319--346.

\bibitem{Scott.Zhang:90}
{\sc L.~R. Scott and S.~Zhang}, {\em Finite element interpolation of nonsmooth
  functions satisfying boundary conditions}, Math. Comp., 54 (1990),
  pp.~483--493.

\bibitem{Stevenson:08}
{\sc R.~Stevenson}, {\em The completion of locally refined simplicial
  partitions created by bisection}, Math. Comp., 77 (2008), pp.~227--241
  (electronic).

\bibitem{Veeser.Verfuerth:09}
{\sc A.~Veeser and R.~Verf\"urth}, {\em Explicit upper bounds for dual norms of
  residuals}, SIAM J. Numer. Anal., 47 (2009), pp.~2387--2405.

\bibitem{Verfuerth:99}
{\sc R.~Verf{\"u}rth}, {\em A note on polynomial approximation in {S}obolev
  spaces}, M2AN Math. Model. Numer. Anal., 33 (1999), pp.~715--719.

\bibitem{Xu.Zikatanov:03}
{\sc J.~Xu and L.~Zikatanov}, {\em Some observations on {B}abu\v ska and
  {B}rezzi theories}, Numer. Math., 94 (2003), pp.~195--202.
%
%
\end{thebibliography}
\end{document}